\pgfplotsset{compat=1.12}
\newcommand{\ii}{{\mathrm i}}
\def\beq{\begin{equation}}
\def\eeq{\end{equation}}
\def\beqnn{\begin{eqnarray*}}
\def\eeqnn{\end{eqnarray*}}
\def\bea{\begin{eqnarray}}
\def\eea{\end{eqnarray}}
\newcommand{\e}{\ensuremath{\mathrm{e}}}
\newcommand{\yy}{{\bf y}}
\begin{document}

\title{Positivity-preserving methods for ordinary differential equations}
\author{Sergio Blanes}\address{Instituto de Matem\'atica Multidisciplinar,
  Universitat Polit\`ecnica de Val\`encia,
  E-46022  Valencia, \textsc{Spain}.
  Email: \texttt{serblaza@imm.upv.es}}
\author{Arieh Iserles}\address{Department of Applied Mathematics and Theoretical Physics,
  Centre for Mathematical Sciences,
  University of Cambridge,
  Wilberforce Road, Cambridge CB4 1LE,
  \textsc{United Kingdom}.
  Email: \texttt{ai10@cam.ac.uk}}
\author{Shev Macnamara}\address{Australian Research Council Centre of Excellence,
  for Mathematical and Statistical Frontiers (ACEMS),
  School of Mathematical and Physical Sciences,
  University of Technology Sydney, NSW~2007,
  \textsc{Australia}.
  Email: \texttt{shev.macnamara@uts.edu.au}}
\date{}
%


\begin{abstract}
Many important applications are modelled by differential equations with positive solutions.
However, it remains an outstanding open problem to develop numerical methods that are both (i) of a high order of accuracy and (ii) capable of preserving positivity.
It is known that the two main families of numerical methods, Runge--Kutta methods and multistep methods,  face an order barrier. If they preserve positivity, then they are constrained to low accuracy: they cannot be better than first order.
We propose novel methods that overcome this barrier: 
second order methods that preserve positivity unconditionally and a third order method that preserves positivity under very mild conditions.
%
%
Our methods apply to a large class of differential equations that have a special graph Laplacian structure, which we elucidate.
The equations need  be neither linear nor autonomous and the graph Laplacian need not be symmetric.
This algebraic structure arises naturally in many important applications where positivity is required.
We showcase our new methods on applications where standard high order methods fail to preserve positivity, including infectious diseases, Markov processes, master equations and chemical reactions.
\end{abstract}

\subjclass{65L05, 65P99, 65L04}
\keywords{Positivity-preserving methods, graph Laplacian matrices, exponential integrators, Magnus integrators}
\maketitle

\section{Introduction}  

{{}

Numerical integration of mathematical models is an essential step in the implementation and analysis of population models: chemical reactions (see for example  \cite{edsberg74ipc,sandu01pni} or \cite{hairer10sod}),  biochemical systems \cite{bruggeman07aso}, and the evolution of epidemics \cite{kermack27act} (see also \cite{giordano20mtc} and references therein). 
Such models are usually formulated as a system of Ordinary Differential Equations (ODEs)
\begin{equation}
\label{eq:ODE1}
  {\bf y}' = {\bf f}(t,{\bf y}), \qquad {\bf y}(0)={\bf y}_0\in\BB{R}^d,
\end{equation}
where ${\bf f}(t,{\bf y})$ is, in the context of this paper,  consistent with two requirements of the application being modeled.
First,  
 if $y_i^0\geq 0, \ i=1,\ldots,d$ then we have {\em positivity preservation\/}:
\begin{displaymath}
 y_i(t)\geq 0 \ \ \forall t, \quad i=1,\ldots,d.
\end{displaymath}

Second,
{{}{there exist ${\bf w}_{\ell}=(w_{\ell,1},\ldots,w_{\ell,d})^\top, \ \ell=1,2,\ldots,k$ such that ${\bf w}_{\ell}^\top {\bf f}(t,{\bf y})=0$ so,}
 the solution satisfies the conditions (with ${\bf y}=(y_1,\ldots,y_d)^\top$, ${\bf y}_0=(y_1^0,\ldots,y_d^0)^\top$)
\begin{displaymath}
  \sum_{i=1}^d w_{\ell,i}y_i=\sum_{i=1}^d w_{\ell,i}y_i^0=c_{\ell},  
	\qquad \ell=1,2,\ldots,k
\end{displaymath}
with $w_{\ell,i}\geq 0$ and $c_{\ell}>0$. 
The most important special case is $k=1$ and ${\bf w}_1
=(1,\ldots,1)={\bf 1}$, which is referred to as {\em mass preservation\/}, and in this case we may assume without loss of generality that $c_1=1$. 

Although the focus of this article is mainly on 
positivity and mass preservation
ODEs, positivity preservation is a  much wider challenge. 
For example, 
{{}{Lotka-Volterra models \cite{beck2015otp,diele2020gni} preserve positivity but not mass as well as some parabolic problems \cite{hansen2012aso}}. 
The stochastic differential equation  associated with the Nobel prize winning Black--Scholes model in finance has positive solutions, but  standard numerical solvers, such as the Euler--Maruyama method, fail to preserve  positivity. 
The Kolmogorov Lecture at the Ninth World Congress in Probability and Statistics concerned methods for preserving positivity in the setting of the stochastic Langevin equations \cite{leite2019constrained}.

{We note in passing that even with these two requirements, \eqref{eq:ODE1} may display rich dynamical behaviour: some systems of this kind converge to a unique steady state, others have a number of steady states, yet others exhibit oscillatory behaviour. 

{{}{The methods proposed in this work are constructed to preserve positivity, while keeping linear invariants preservation to high accuracy (symplectic integrators preserve the symplectic structure of Hamiltonian systems while not exactly preserving energy, but this gives good properties in regards to error propagation over long time intervals). 
However, in the case where only mass preservation is required 
there are well known mathematical results that allow us to adapt the methods to preserve exactly, and for this reason this case is now treated in more detail.}

\subsection{Graph Laplacians and ODEs}

A useful way to envisage mass and positivity preservation is that for every $t\geq0$  the state variable ${\bf y}(t)$ is a discrete probability distribution of $d$ species.  
This corresponds to the case $k=1$, ${\bf w}_1={\bf 1}$
and, as we will show in Proposition \ref{prop:GraphLaplacians}, these properties can be preserved if the vector field in \eqref{eq:ODE1} can be written in the form (see also e.g. \cite{bertolazzi1996pac,colonna2020otr,formaggia2011pac})
\begin{displaymath}
    {\bf f}(t,{\bf y}) = A(t,{\bf y}){\bf y}
\end{displaymath}
where  the matrix $A:\BB{R}\times \BB{R}^d\rightarrow\BB{R}^{d\times d}$ is a graph Laplacian.


\vspace{6pt}
\noindent{\bf Definition} \textit{An $n\times n$ real matrix $A$ is a graph Laplacian if it has  the following properties:
\begin{description}
\item[Property 1 (pattern of signs)] 
$A_{k,\ell}\geq0$ for $k,\ell=1,\ldots,n$, $k\neq\ell$,  $A_{k,k}\leq0$ for $k=1,\ldots,n$ and 
\item[Property 2 (zero column sum)] 
$\sum_{k=1}^n A_{k,\ell}=0$ for $\ell=1,\ldots,n$.
\end{description} }

\vspace{6pt}
We denote the set of all $n\times n$ graph Laplacians by ${\cal L}_n$.
The same term `graph Laplacian' is used with different meanings in the literature -- in our work, we allow it to be non-symmetric.

For simplicity, we consider the autonomous case. 
(The general nonautonomous case can be considered similarly, as we will show latter.)
We  focus on the solution of the nonlinear ODE
\begin{equation}
\label{eq:ODE2}
  {\bf y}' = A({\bf y}){\bf y}, \qquad \qquad {\bf y}(0)={\bf y}_0\in\BB{R}^d,
\end{equation}
{{}{where we assume throughout that  $A({\bf y})$ has the same pattern of signs as a graph Laplacian, i.e. we assume Property 1 of the definition above.
(In some examples, such as the MAPK cascade example, we do not assume Property 2, i.e.\ we do not always assume ${\bf 1}^\top A({\bf y})={\bf 0}^\top$, and we demonstrate that our methods can nevertheless work well.)
We typically also assume that all components of the initial condition are  nonnegative.}
Many applications fit this framework:
Markov processes in continuous time on discrete states; master equations \cite{macnamara2008multiscale}; single molecule chemistry  \cite{DimpleMachineCohen2012} (Fig.~4); studies of robustness of Turing pattern formation in stochastic settings  \cite{maini2012turing,hellander2017robustness}; and lasers and quantum dots \cite{timm2009random}.

Given two compatible matrices $P$ and $Q$ we say that $P\succ Q$ if $P_{i,j}>Q_{i,j}$ for all $i,j$ and $P\succeq Q$ if $P_{i,j}\geq Q_{i,j}$.
We assume that ${\bf y}_0\succeq{\bf 0}$ and ${\bf 1}^\top{\bf y}_0=1$. Then the solutions of \eqref{eq:ODE2} have the following desirable features.

\begin{proposition}
\label{prop:GraphLaplacians}
Solutions of \eqref{eq:ODE2} 
{{}{with ${\bf y}_0\succeq{\bf 0}, {\bf 1}^T{\bf y}_0=1$}}
 have the following two properties:
\begin{description}
 \item[Positivity] ${\bf y}(t)\succeq{\bf 0}$ for all $t\geq0$, and
\item[Conservation of mass]
  ${\bf 1}^\top\!{\bf y}(t)=1$, for all $t\geq0$. 
 \end{description}

\end{proposition}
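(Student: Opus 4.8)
The plan is to treat the two properties separately, establishing conservation of mass first since it is the easier of the two and, in effect, a linear statement, and then deriving positivity by a comparison/invariance argument that exploits Property~1 of the graph Laplacian.

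For conservation of mass, I would compute $\frac{\mathrm{d}}{\mathrm{d}t}\,{\bf 1}^\top{\bf y}(t) = {\bf 1}^\top{\bf y}'(t) = {\bf 1}^\top A({\bf y}(t)){\bf y}(t)$. By Property~2 (zero column sum), ${\bf 1}^\top A({\bf y}) = {\bf 0}^\top$ for every argument ${\bf y}$, so the derivative vanishes identically and ${\bf 1}^\top{\bf y}(t) = {\bf 1}^\top{\bf y}_0 = 1$ for all $t\geq0$. This also shows the solution stays in a bounded affine hyperplane, which is convenient for the positivity argument because it rules out finite-time blow-up within the relevant region (assuming $A$ is, say, locally Lipschitz so that solutions exist and are unique).

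For positivity, the idea is to show that the nonnegative orthant $\BB{R}^d_{\geq0}$ is forward-invariant for \eqref{eq:ODE2}. The standard tool is a tangent/subtangentiality condition (Nagumo's theorem, or a Gr\"onwall-type barrier argument): it suffices to check that on each face $\{{\bf y}\succeq{\bf 0} : y_i = 0\}$ the vector field points into (or along) the orthant, i.e.\ $f_i(t,{\bf y}) = (A({\bf y}){\bf y})_i \geq 0$ whenever $y_i = 0$ and ${\bf y}\succeq{\bf 0}$. But $(A({\bf y}){\bf y})_i = \sum_{\ell} A_{i,\ell}({\bf y}) y_\ell = \sum_{\ell\neq i} A_{i,\ell}({\bf y}) y_\ell$ when $y_i=0$, and by Property~1 each off-diagonal entry $A_{i,\ell}({\bf y})\geq0$ while each $y_\ell\geq0$, so the sum is nonnegative. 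Hence the subtangentiality condition holds on the whole boundary and the orthant is invariant. Alternatively, to avoid invoking Nagumo directly, I would run the elementary perturbation argument: fix $T>0$, consider the perturbed system ${\bf y}_\varepsilon' = A({\bf y}_\varepsilon){\bf y}_\varepsilon + \varepsilon{\bf 1}$ with the same initial data; argue that ${\bf y}_\varepsilon$ stays strictly positive on $[0,T]$ (if some component first hits $0$ at time $t_\ast$, its derivative there is $\geq \varepsilon > 0$, a contradiction), and then let $\varepsilon\to0^+$ using continuous dependence on parameters to conclude ${\bf y}(t)\succeq{\bf 0}$.

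The main obstacle — really the only subtle point — is the regularity/well-posedness bookkeeping needed to make the boundary argument rigorous: one must know the solution exists on the relevant time interval and is unique, and one must handle the possibility that a trajectory touches the boundary and the "first exit time'' argument must be set up carefully (e.g.\ via the strictly perturbed system, or via a Gr\"onwall estimate on $\min_i y_i(t) e^{\lambda t}$ for a suitable $\lambda$ bounding the diagonal entries). Since the paper only assumes Property~1 here and works with ${\bf y}$ in the bounded simplex, these technicalities are routine, and I would state them briefly rather than belabour them.
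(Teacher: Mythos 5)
Your proof is correct and follows essentially the same route as the paper: the identical ${\bf 1}^\top A({\bf y})={\bf 0}^\top$ computation for conservation of mass, and the same key observation that $(A({\bf y}){\bf y})_i=\sum_{\ell\neq i}A_{i,\ell}({\bf y})\,y_\ell\geq 0$ on the face $y_i=0$ for positivity. The only difference is that the paper simply asserts that a nonnegative derivative at the zero prevents a sign change, whereas you supply the Nagumo/perturbation machinery needed to make that step fully rigorous --- a worthwhile tightening rather than a different argument.
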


\begin{proof}
  The statement about mass conservations is trivial, because
  \begin{displaymath}
    {\bf 1}^\top{\bf y}'(t)={\bf 1}^\top A({\bf y}(t)){\bf y}(t)={\bf 0}^\top{\bf y}(t)=0
  \end{displaymath}
  implies that ${\bf 1}^\top{\bf y}(t)\equiv\mbox{const}={\bf 1}^\top{\bf y}_0=1$. 

  To prove the statement about positivity, we consider any $t^*\geq 0$ such that there exists $k^*\in\{1,2,\ldots,d\}$ with $y_{k^*}(t^*)=0$ and such that ${\bf y}(t^*)\succeq{\bf 0}$ -- clearly, unless such $t^*$ exists, ${\bf y}(t)$ stays forever  in the nonnegative cone. Note that it is perfectly possible for $t^*$ to be zero, also it is possible that several components of ${\bf y}(t)$ vanish at $t=t^*$, this makes no difference to our argument. We note that, by \eqref{eq:ODE2},
  \begin{displaymath}
    y_{k^*}'(t^*)=\sum_{\ell=1}^d A_{k^*,\ell}({\bf y}(t^*))y_\ell(t^*)\geq0,
  \end{displaymath}
  because $A$ is a graph Laplacian, so off-diagonal entries are nonnegative. Therefore $y_{k^*}$ cannot change sign at $t^*$, and it must stay in the nonnegative cone.
\end{proof}

\paragraph{Remark.}
Note in the proof of Proposition~\ref{prop:GraphLaplacians} that 
property 1 alone of the definition of the Laplacian (pattern of signs) suffices to give positivity, and that, separately, 
property 2 alone of the definition of the Laplacian suffices to give mass preservation.
In particular, if the matrix $A({\bf y})$ has the same pattern of $\pm$ signs as a Laplacian (but we make no assumption on the column sums of $A({\bf y})$), then it is still true that solutions of $ {\bf y}' = A({\bf y}){\bf y}$, preserve positivity.


Let us now consider some properties of graph Laplacian matrices that  allow us to deduce additional qualitative properties of the solution of \eqref{eq:ODE2}.

\begin{theorem}
  \label{thm:GEvals}
  Let $A\in{\cal L}_n$. Then it has an  eigenvalue at the origin, which is simple if $A$ is irreducible, and all its other eigenvalues reside in $\BB{C}^-=\{z\in\BB{C}\,:\, \Re z<0\}$.
\end{theorem}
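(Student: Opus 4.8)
The plan is to combine the Gershgorin circle theorem with a Perron--Frobenius-type argument for the irreducibility claim. First I would establish the easy part: since $A$ has zero column sums (Property 2), we have ${\bf 1}^\top A={\bf 0}^\top$, so $\mathbf{1}$ is a left eigenvector with eigenvalue $0$; hence $0\in\sigma(A)$. For the location of the remaining spectrum, I would apply Gershgorin to $A^\top$ (same spectrum as $A$). The $\ell$-th Gershgorin disc of $A^\top$ is centred at $A_{\ell,\ell}\le 0$ with radius $\sum_{k\neq\ell}|A_{k,\ell}|=\sum_{k\neq\ell}A_{k,\ell}=-A_{\ell,\ell}$ using Properties 1 and 2. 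So every disc is $\{z:|z-A_{\ell,\ell}|\le -A_{\ell,\ell}\}$, which is contained in the closed left half-plane $\{\Re z\le 0\}$ and touches the imaginary axis only at the origin. This immediately gives $\sigma(A)\subseteq\{\Re z\le 0\}$, but to get the strict statement $\sigma(A)\setminus\{0\}\subseteq\BB{C}^-$ I need to rule out nonzero purely imaginary eigenvalues.

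To exclude nonzero eigenvalues on the imaginary axis, I would argue that such an eigenvalue would have to lie on the boundary of every Gershgorin disc that contains it, and then invoke the standard refinement of Gershgorin for irreducible matrices (the Taussky theorem): if an eigenvalue of an irreducible matrix lies on the boundary of the union of Gershgorin discs, it lies on the boundary of \emph{every} disc. A nonzero $\lambda = \ii s$, $s\neq 0$, on $\partial\{|z-A_{\ell,\ell}|\le -A_{\ell,\ell}\}$ forces $s^2 + A_{\ell,\ell}^2 = A_{\ell,\ell}^2$ for all $\ell$, hence $s=0$, a contradiction. This handles the irreducible case. For the general (possibly reducible) case, and also for the simplicity claim, I would pass to the Frobenius normal form: permuting indices, $A$ (or rather work with $A^\top$) is block upper triangular with irreducible diagonal blocks, and $\sigma(A)$ is the union of the spectra of these blocks. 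Each diagonal block is a square submatrix of a graph Laplacian whose off-diagonal entries are nonnegative and whose column sums are $\le 0$ (they can be strictly negative if mass "leaks" to other blocks); by Gershgorin again each such block has spectrum in $\{\Re z\le 0\}$, and a block has $0$ in its spectrum only if it has zero column sums, i.e. is itself a genuine irreducible graph Laplacian, in which case the imaginary-axis argument above shows $0$ is the only such eigenvalue and (by Taussky applied to that irreducible block, or by noting the $1$-dimensional left null space spanned by the all-ones vector on that block) it is simple.

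The main obstacle I anticipate is the simplicity of the zero eigenvalue when $A$ is irreducible: Gershgorin alone does not give algebraic simplicity, only that $0$ is on the boundary. The cleanest route is Perron--Frobenius: write $A = -D + N$ where $D = -\operatorname{diag}(A)\succeq 0$ and $N\succeq 0$ has zero diagonal; then for $c$ large enough $cI + A = (c I - D) + N$ is a nonnegative matrix, irreducible because $N$ inherits the irreducibility pattern of $A$. By Perron--Frobenius its spectral radius $\rho$ is a simple eigenvalue with a positive eigenvector; since $cI+A$ has constant column sums equal to $c$, one checks $\rho = c$ and hence $0 = \rho - c$ is a simple eigenvalue of $A$. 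The only care needed is to confirm that adding $cI$ and the reduction $A=-D+N$ genuinely preserve irreducibility (it does, since irreducibility depends only on the off-diagonal sign pattern), and that $\rho(cI+A)=c$ (the column-sum / left-eigenvector $\mathbf 1$ argument, together with $\rho$ being the unique eigenvalue with a nonnegative eigenvector, pins this down). I would then assemble these pieces into a short proof: Gershgorin for the half-plane inclusion, Taussky or a direct argument to push nonzero eigenvalues strictly inside, and Perron--Frobenius for existence and simplicity of the zero eigenvalue.
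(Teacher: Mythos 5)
Your proposal is correct and follows essentially the same route as the paper: ${\bf 1}^\top A={\bf 0}^\top$ supplies the zero eigenvalue, the column-wise Gerschgorin discs (each tangent to the imaginary axis at the origin from the left) confine the remaining spectrum to $\BB{C}^-$, and Perron--Frobenius applied to the nonnegative shift $A-\bigl(\min_k A_{k,k}\bigr)I$ yields simplicity in the irreducible case. The Taussky refinement and the Frobenius normal form are superfluous here, since a nonzero purely imaginary point already lies outside the union of the discs, and the half-plane inclusion needs no irreducibility.
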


\begin{proof}
  Since ${\bf 1}^\top\! A={\bf 0}^\top$, it follows that $0\in\sigma(A)$. To locate the remaining eigenvalues we use the Gerschgorin theorem, applying it to columns (typically it is applied to rows, but this makes no difference). Thus, letting
  \begin{displaymath}
    \BB{S}_\ell=\left\{z\in C\,:\, |z-A_{\ell,\ell}|\leq \sum_{k\neq \ell} |A_{k,\ell}|\right\},\qquad \ell=1,\ldots,n,
  \end{displaymath}
  we have $\sigma(A)\subset\bigcup_{\ell=1}^n \BB{S}_\ell$. By the definition of graph Laplacian, all Gerschgorin discs live in ${\bf cl}\,\BB{C}^-$ and adjoin $ \ii \BB{R}$ only at the origin. Therefore $\sigma(A)\setminus\{0\}\in\BB{C}^-$.  
  
  It remains to prove that 0 is a simple eigenvalue. Let $\alpha=\min_{k=1,\ldots,n} A_{k,k}$, then the entries of $B=A-\alpha I\neq O$ are all nonnegative. Therefore, according to Frobenius--Perron theory \cite{berman79nmm}, irreducibility implies that the largest in modulus eigenvalue of $B$ is positive and simple. Since this is $-\alpha$, it follows that 0 is a simple eigenvalue of $A$.
\end{proof}

Incidentally, one of the less well-known formulations of the Gerschgorin theorem  states that if $A$ is irreducible then an eigenvalue might be on the boundary of one Gerschgorin disc only if it is on the boundary of all Gerschgorin discs -- this is certainly the case with 0.


\begin{proposition}
  \label{prop:Evalues}
Assume the matrix $A\in{\cal L}_n$ is symmetric.
 Then $\D\|{\bf y}(t)\|^2/\D t\leq0$.
\end{proposition}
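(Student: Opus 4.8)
The plan is to compute $\D\|{\bf y}(t)\|^2/\D t$ directly and exploit the symmetry of $A$. Writing $\|{\bf y}\|^2={\bf y}^\top{\bf y}$, differentiation and \eqref{eq:ODE2} give
\begin{displaymath}
  \frac{\D}{\D t}\|{\bf y}(t)\|^2 = 2{\bf y}(t)^\top{\bf y}'(t) = 2{\bf y}(t)^\top A({\bf y}(t)){\bf y}(t),
\end{displaymath}
so the whole matter reduces to showing that the quadratic form ${\bf v}^\top A{\bf v}$ is nonpositive for every real vector ${\bf v}$, i.e.\ that a symmetric graph Laplacian is negative semidefinite.

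To see that, I would invoke Theorem~\ref{thm:GEvals}: a symmetric $A\in{\cal L}_n$ has only real eigenvalues (being symmetric), one of them is $0$ and all the rest lie in $\BB{C}^-$, hence are strictly negative. A symmetric matrix whose eigenvalues are all $\leq 0$ is negative semidefinite, so ${\bf v}^\top A{\bf v}\leq0$ for all ${\bf v}\in\BB{R}^d$, and in particular for ${\bf v}={\bf y}(t)$. This yields $\D\|{\bf y}(t)\|^2/\D t\leq0$ immediately. (One could alternatively give a self-contained argument: for symmetric $A$, Property~2 forces the row sums to vanish as well, and then $-A$ is a symmetric diagonally dominant matrix with nonnegative diagonal, hence positive semidefinite; but routing through Theorem~\ref{thm:GEvals} is shorter.)

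The only real subtlety — and the step I would be most careful about — is that $A$ here depends on ${\bf y}$, so strictly speaking what we need is that $A({\bf y})$ is negative semidefinite for each admissible ${\bf y}$; the hypothesis should be read as saying $A({\bf y})\in{\cal L}_n$ and is symmetric for every ${\bf y}$ in the relevant domain. Granting that, the eigenvalue/quadratic-form argument applies pointwise along the trajectory and the conclusion follows. There is essentially no other obstacle; this is a short consequence of Theorem~\ref{thm:GEvals} together with the elementary fact that a symmetric matrix with nonpositive spectrum is negative semidefinite.
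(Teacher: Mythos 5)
Your proof is correct and follows essentially the same route as the paper: both reduce $\D\|{\bf y}\|^2/\D t$ to the quadratic form $2\,{\bf y}^\top A({\bf y}){\bf y}$ and bound it by the largest eigenvalue of the symmetric matrix $A({\bf y})$, which is nonpositive by the Gerschgorin argument (the paper phrases this via the spectral abscissa, you via negative semidefiniteness through Theorem~\ref{thm:GEvals}, but the substance is identical). Your explicit use of symmetry to justify the Rayleigh-quotient bound is, if anything, slightly more careful than the paper's wording.
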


\begin{proof}
  We compute
  \begin{displaymath}
    \frac12 \frac{\D\|{\bf y}(t)\|^2}{\D t}={\bf y}^\top(t){\bf y}'(t)={\bf y}^\top(t) A({\bf y}(t)){\bf y}(t)\leq \alpha_+(A({\bf y}(t)))\|{\bf y}(t)\|^2,
  \end{displaymath}
  where $\alpha_+(B)$ is the {\em spectral abscissa\/} -- the eigenvalue of the matrix $B$ with the largest real part (which in the case of $A$ is real because of the  Perron--Frobenius theory). This is true because $\alpha_+(B)\geq {\bf v}^\top B{\bf v}/\|{\bf v}\|^2$ for any square matrix $B$ and a nonzero vector ${\bf v}$. Since our $A({\bf y})$ is graph Laplacian, it follows at once from the Gerschgorin theorem that $\alpha_+(A({\bf y}(t)))\leq 0$ and, since $0\in\sigma(A({\bf y}(t)))$, we deduce that $\D\|{\bf y}(t)\|^2/\D t\leq0$.
\end{proof} 

Let $\hat{\bf y}$ be the eigenvector corresponding to the simple eigenvalue $0$. In the symmetric case, it is  clear that $\|\yy(t)-\hat{\yy}\|^2$ is a monotonically decreasing function -- using the fact that $A({\bf y}(t))\hat{{\bf y}}={\bf 0}$,
\begin{displaymath}
  [\yy(t)-\hat{\yy}]'=\yy'(t)=A(\yy(t))\yy(t)=A(\yy(t))[{\bf y}(t)-\hat{{\bf y}}]
\end{displaymath}
and we continue as before. 
 
 In the nonsymmetric case, the issue of stability needs more discussion.
 The two defining properties of the graph Laplacian together ensure that the columns of the matrix exponential are probability vectors, so that, when $A$ is a constant matrix, in the 1-norm we always have $\|\exp(tA)\| =1$, $t\geq0$.
 In the case of a constant matrix, these matrices are sometimes known as `W-matrices' in the statistical physics literature and, by studying the adjoint  ${\bf z}'(t) = A^\top {\bf z}(t)$ -- with arguments similar to those of our Proposition~\ref{prop:GraphLaplacians} -- it is known that the minimum of the solution ${\bf z}$ is increasing, and that the maximum is decreasing.
  In the 2-norm, a sufficient condition for strong stability of ${\bf y}' = A {\bf y}(t)$ with solution ${\bf y}(t) = \exp(tA){\bf y}(0)$, is that $(A+A^\top)$ be negative definite. 
 Note that this condition is more restrictive than merely the assumption that the eigenvalues of $A$ have negative real part
  (because then it would still be possible that $(A+A^\top)$ had a positive eigenvalue).
 This issue of stability  is related to \textit{`the hump'} in the classical literature on the numerical analysis of the matrix exponential, and to the \textit{lognorm}, and also to the subject of \textit{pseudospectra.}
  Nonsymmetric graph Laplacians exhibit significant pseudospectra, manifesting themselves in various ways, such as a more subtle stability analysis, and the failure of  standard eigenvalue algorithms \cite{iserles2019applications,ShevCauchyIntegralMasterEqnPseudoSpectraCTAC2015,macnamara20sob}.
  A sufficient condition for stability of operator splitting methods is that each part separately be strongly stable, although this may be too pessimistic in practice.
For operator splitting methods, the graph Laplacian can sometimes be expressed as the sum of two matrices, each of which is separately a graph Laplacian
with a physical interpretation \cite{MacBer08}.
In general, operator splitting
does not preserve the steady state \cite{speth2013balanced} -- so it is worth pointing out that the novel splitting methods that we introduce in this work, for example later in \eqref{eq:smoothing}, in our numerical experiments, do have the desirable property that they preserve the steady state.
In the nonautonomous case, but still linear case,  it can be shown under suitable assumptions that the difference of any two solutions is decreasing in the 1-norm, but the issue of stability is \textit{much} more delicate.
For instance, see the catalogue of counterexamples, and Theorem 3.1 described in   \cite{earnshaw2010global}.


To sum up, the solution of a mathematical model given by \eqref{eq:ODE2} with ${\bf y}_0\succeq {\bf 0}$ and where $A({\bf y})$ is a graph Laplacian matrix (assuming  ${\bf y}\succeq {\bf 0}$) always preserves mass and always preserves positivity.
Often, the model \eqref{eq:ODE2} is also stable and converges to a steady state. 
These features correspond to the phenomenological desiderata in for example epidemiological models.

In theory, there are always exact formulae for the right eigenvector  corresponding to the zero eigenvalue of a nonsymmetric graph Laplacian matrix $A$, via the Matrix-Tree Theorem \cite{gunawardena2012linear}.
This is the steady state of the corresponding linear Laplacian dynamical system, and in special cases, there are also formulae for the dynamical solutions
\cite{earnshaw2010global,earnshaw2010invariant,iserles2019applications}.

Unfortunately, in general, the exact solution of these dynamical systems is  unknown, so we need to resort to numerical algorithms.
Using backward error analysis, we can envisage a numerical method as the exact solution of a perturbed model. While this is typically adequate across a single step, unless the  method is  chosen carefully, a numerical solution is highly unlikely to respect the important special structure of \eqref{eq:ODE2} across the entire time interval of interest.


The mathematical models we are considering in this paper are based on differential equations whose solutions preserve some underlying geometric structure. The design and analysis of numerical integrators that preserve the qualitative features  of the underlying differential equations is the subject of  {\em Geometric Numerical Integration\/} \cite{blanes16aci,hairer10gni,iserles09afc,sanzserna94nhp}. We are not only concerned with the accuracy and stability of  numerical schemes but also with their geometric properties, which reflect important features of the phenomena being modelled. This endows the integrators with an improved qualitative behaviour, but also typically leads to significantly more accurate results.


For example, in \cite{giordano20mtc} the authors consider a mathematical model for the COVID-19 epidemic in Italy, while paying much attention so that the proposed model has the structure of \eqref{eq:ODE2}, but then numerically solve it using the first order explicit Euler method 
\begin{displaymath}
  {\bf y}_{n+1} = {\bf y}_{n} + h {\bf f}({\bf y}_{n})
\end{displaymath}
where $h$ is the time step and ${\bf y}_{n}\simeq {\bf y}(t_{n})$ with $t_n=t_0+nh$. 
%
We easily see that
\begin{displaymath}
  {\bf 1}^\top{\bf y}_{n+1} = {\bf 1}^\top{\bf y}_{n} + h {\bf 1}^\top{\bf f}({\bf y}_{n}) = {\bf 1}^\top{\bf y}_{n} = \ldots =  {\bf 1}^\top{\bf y}_{0} 
\end{displaymath}
and then the mass is preserved (this is also the case for most standard methods like Runge--Kutta or multistep methods).
{{}{However, it is well known that, in general, 
this method does not preserve positivity unconditionally.}


This inadequate behaviour cannot be rectified by a standard higher-order method: in \cite{bolley78cdl} it is shown that within the class of linear multistep and Runge--Kutta methods unconditional positivity restricts the order of the method to just one.\footnote{This is a necessary condition which, alas, is not sufficient: the above explicit Euler method is of order one but does not preserve positivity.}

For non-stiff problems and for relatively short time integration, an Euler method, or any other standard method, can provide sufficiently accurate, satisfactory results. However, if a mathematical model is stiff  (this is typical to equations of chemical kinetics) or need be solved for long time intervals,  standard methods may produce negative solutions or become unstable. While the stiffness in chemical kinetics equations can be dealt with using implicit methods and mass is preserved by most numerical methods,  positivity remains an outstanding challenge. 

The most efficient solvers considered in \cite{hairer10sod} for low to medium accuracy in the numerical solution of stiff kinetic equations are  Rosenbrock methods. In addition, they are among the simplest implicit schemes to be implemented in a code, yet they fail to preserve positivity. Note that there exist exponential Rosenbrock-type methods \cite{hochbruck09ert} that involve the computation of the exponential of the Jacobian. However, in general, this Jacobian is not a graph Laplacian and positivity cannot be guaranteed. 

The objective of preserving mass and positivity in numerical integration, in particular within the context of chemical kinetics, received  a measure of attention, although perhaps less than it deserves given its importance in applications. An obvious device to avoid the solution from becoming negative is {\em clipping:\/} the practice of converting a negative component to zero. This, of course, interferes with the preservation of mass but the latter can be recovered using laborious optimization  procedure in every time step \cite{sandu01pni}. The effects of this costly algorithm on long-term accuracy and stability are unknown. 

{{}{
Another approach toward preservation of mass and positivity are  Runge--Kutta--Patankar methods \cite{burchard03hoc,kopecz18ooc,patankar80nht,bertolazzi1996pac}.
The idea is to adapt Runge--Kutta-like methods for {\em pro\-duc\-tion--destruction systems\/} in chemical kinetics. 
We will show that this class of methods can be seen as particular approximation to the methods proposed in the present work.}

%


\section{Illustrative examples}

To illustrate our analysis we consider several simple population  models from the literature.

\

\paragraph{Example 1: The Robertson reaction.}
Let us consider the following example of chemical reactions, $A    \longrightarrow  B $ and $ B + B  \longrightarrow   B+C   \longrightarrow  A+C $, leading to the stiff differential equations for concentrations ${\bf y}=(y_1,y_2,y_3)$ of $A,B,C$ \cite{hairer10sod} (P.~157):
\begin{equation}\label{eq:Robertson}
  \begin{array} {rcrrrl}
     y_1' &\!\!\!=\!\!\!& -0.04 y_1&+10^4y_2y_3, & & \qquad y_1(0)=1\\
     y_2' &\!\!\!=\!\!\!&  0.04 y_1&-10^4y_2y_3 & - 3\cdot 10^7 y_2^2, & \qquad y_2(0)=0\\
     y_3' &\!\!\!=\!\!\!&  & & 3\cdot 10^7 y_2^2, &\qquad  y_3(0)=0,
  \end{array}
\end{equation}
that,  rewritten in a vector form, read
\begin{equation}\label{eq:RobertsonLin}
  \frac{\mathrm{d}}{\mathrm{d}t}\left[
  \begin{array} {c}
   y_1\\ y_2\\ y_3
  \end{array} \right]=
  \left[\begin{array} {rcc}
    -0.04 & 10^4y_3 & 0  \\
    0.04 &- 3\cdot 10^7 y_2-10^4y_3 & 0 \\
    0 \ & 3\cdot 10^7 y_2 &  0 
  \end{array} \right]
  \left[
  \begin{array} {c}
    y_1\\ y_2\\ y_3
  \end{array} \right]\!, 
\end{equation}
where the matrix is graph Laplacian.
This example fits into the framework of Theorem~\ref{thm:LawMassAction:Laplacian}, which comes later.

Note that the system can also be written in many different ways, for example
\begin{equation}\label{eq:RobertsonLinNO}
  \frac{\mathrm{d}}{\mathrm{d}t}\left[
   \begin{array} {c}
     y_1\\ y_2\\ y_3
   \end{array} \right] =
  \left[\begin{array} {rrr}
     -0.04 & 0 \ \ & 10^4y_2 \\
     0.04 &- 3\cdot 10^7 y_2 &-10^4y_2 \\
     0 \ & 3\cdot 10^7 y_2 &  0 \ \
  \end{array} \right]
  \left[
  \begin{array} {c}
    y_1\\ y_2\\ y_3
   \end{array} \right]\!,
\end{equation}
where now the matrix is no longer a graph Laplacian. As we will see, it is crucial to write properly  the equations for the numerical solutions to preserve their qualitative properties.

\

\paragraph{Example 2: The SIR model.}

The {\em Susceptible--Infected--Recovered (SIR) model\/} describes the temporal epidemic evolution in terms of three variables for the population: 
 $S(t)$: (Susceptible),  $I(t)$ (Infected) and $R(t)$ (Recovered). It is usually asssumed that the total population does not change during the infection period. $S, I$ and $R$ denote the fractions with respect to the total population: $S(t) + I(t) + R(t) \equiv 1$. This model was proposed in  \cite{kermack27act}
\begin{eqnarray}
 &&S' = -R_0 S I \ ,  \nonumber  \\
 &&I' = R_0 S I  - I \ ,\label{eq:SIR} \\
 &&R' =  I \ , \nonumber
\end{eqnarray}
where $R_0>0$ is the basic reproduction number, and the system can be written in the form
\begin{equation}\label{eq:SIRLin}
\frac{d}{dt}\!\left[
\begin{array} {c}
 S\\
 I\\
 R
\end{array} \right] =
\left[\begin{array} {rcc}
 -R_0 I & 0 & 0  \\
  R_0 I &- 1 & 0 \\
  0 \ & 1 &  0 
\end{array} \right]
\left[
\begin{array} {c}
 S\\
 I\\
 R
\end{array} \right]\,
\end{equation}
{{}{which is like \eqref{eq:ODE2} with ${\bf y}=[S,I,R]^\top$ and}
the matrix is evidently a graph Laplacian.

\

\paragraph{Example 3: Laplacian dynamics on graphs (autonomous and linear).}
Graph Laplacian dynamics, ${\bf y}' = \mathcal{L}(G) {\bf y}$, where $\mathcal{L}(G)$ is a constant matrix, representing the Laplacian of a directed graph $G$, gives rise to a large class of applications in  biochemical kinetics, including Michaelis--Menten enzyme kinetics, allosteric enzymes, G-protein coupled receptors, ion channels, and gene regulation   \cite{gunawardena2012linear} (equation (3)).
Discussion of conditions under which such  linear systems always  converge to a steady state, and discussion of the sense in which that might be considered unique is given in \cite{mirzaev2013Laplacian}.
That linear setting ${\bf y}' = \mathcal{L}(G) {\bf y}$ is a special case of the more general framework here where we focus on the exact nonlinear model in \eqref{eq:ODE2}.

\

\paragraph{Example 4: Cardiac ion channels (nonautonomous and linear).}
Nonautonomous Laplacian systems, 
$
{\bf y}' = A(t) {\bf y},
$
 have many important applications, including cardiac ion channel kinetics \cite{earnshaw2010global,earnshaw2010invariant}.
In special cases, there are also exact solutions for the dynamical solutions, such as the explicit Magnus formul\ae{} in \cite{iserles2019applications}, and closely related invariant manifolds of binomial-like solutions.

\

\paragraph{Example 5: MAPK cascade (autonomous and nonlinear).}
The mitogen-activated protein kinase (MAPK) cascade is fundamental in cell signalling biology and cancer biology, and it is modelled by eighteen differential equations with rates  given by the Law of Mass Action, together with some linear conservation laws   \cite{qiao2007bistability}.
By our Theorem~\ref{thm:LawMassAction:Laplacian}, in the sequel, this MAPK model fits our framework of  \eqref{eq:ODE2}, subject to the remarks we make following Proposition~\ref{prop:GraphLaplacians}.
The Laplacian dynamics mentioned in the above constant coefficient and linear examples, where  convergence to a steady state is common \cite{mirzaev2013Laplacian}, makes it tempting to conjecture that
the model we focus on here in \eqref{eq:ODE2}, likewise always converges to a steady state.
However, a counterexample is provided by the MAPK cascade, which can be modelled by our nonlinear Laplacian dynamics \eqref{eq:ODE2}, and which has been shown by numerical simulations to exhibit both bistability and  oscillations  \cite{qiao2007bistability}.

We have taken the model of \cite{hadavc2017minimal} (Table 3, Fig 3, equations (12)--(17)), which is closely related to the MAPK cascade, and rewritten it here in the form of our model \eqref{eq:ODE2}, to show that it is clearly an example of the Laplacian dynamics  that we study in this paper:
\begin{equation} 
\label{eq:Oscillation:model}
  \frac{\mathrm{d}}{\mathrm{d}t}\!\left[
  \begin{array} {c}
   y_1\\ y_2\\ y_3 \\ y_4 \\ y_5 \\ y_6
  \end{array} \right]=
  \left[\begin{array} {cccccc}
    -k_7 - k_1y_2 & 0 & 0 &k_2 & 0&k_6 \\
      0& -k_1y_1 & k_5 &0 & 0& 0\\
     0& 0 &  -k_3y_1 - k_5 & k_2 & k_4 &0 \\
     (1- \alpha) k_1y_2& \alpha k_1y_1 & 0 & -k_2& 0&0 \\
     0& 0& k_3y_1 &0 &-k_4 & 0\\
     k_7 &  0&  0& 0& 0&-k_6 
  \end{array} \right]
  \left[
  \begin{array} {c}
   y_1\\ y_2\\ y_3 \\ y_4 \\ y_5 \\ y_6
  \end{array} \right]\!. 
\end{equation}
We take the same rate constants $k_1=\frac{100}{3}$, $k_2=\frac13$, $k_3=50$, $k_4=\frac12$, $k_5=\frac{10}{3}$, $k_6=\frac{1}{10}$, $k_7=\frac{7}{10}$, and  initial state $y(0) = [0.1,0.175,0.15,1.15,0.81,0.5]^\top$.
Note that we have $ {\bf y}' = A(\alpha,{\bf y}){\bf y}$, where $\alpha\in[0,1]$ is a parameter we can freely choose in this interval and the  matrix $A(\alpha,{\bf y})$ has the same pattern of signs as a Laplacian, but that a column of $A(\alpha,{\bf y})$ does not always sum to zero, so this fits our framework of  \eqref{eq:ODE2},  subject to the remarks we make following Proposition~\ref{prop:GraphLaplacians}, and this is also an example of our later Theorem~\ref{thm:LawMassAction:Laplacian}.
This model  possesses two conservation of mass laws, namely both
$y_1+y_4+y_6$ and $y_2+y_3+y_4+y_5$ are constants, which have physical interpretation in terms of the total enzyme of two types of kinases.
Those two conservation laws correspond to ${\bf w}_1 = [1,0,0,1,0,1]^\top$, and ${\bf w}_2=[0,1,1,1,1,0]^\top$, respectively.
Note that we have
\[
{\bf w}_1^\top A(\alpha,{\bf y}){\bf y} = 0 , \qquad {\bf w}_2^\top A(\alpha,{\bf y}) {\bf y} = 0, 
\]
and this is irrespective of the value of $\alpha\in(0,1)$. However, if we take   $\alpha=0$
we have that
\[
{\bf w}_1^\top A(0,{\bf y}) = 0 , \qquad {\bf w}_2^\top A(0,{\bf y}) \ne 0, 
\]
while for  $\alpha=1$ 
\[
{\bf w}_1^\top A(1,{\bf y}) \ne 0 , \qquad {\bf w}_2^\top A(1,{\bf y}) = 0. 
\]
It should be possible to use methods based on matrix exponentials  (such as the methods proposed in this paper) to respect e.g.\ the second conservation law, if we take  $\alpha=1$ because ${\bf w}_2^\top A({\bf y}) = 0$, so ${\bf w}_2^\top \exp(tA({\bf y})) = {\bf w}_2^\top$.
However, because ${\bf w}_1^\top A({\bf y}) \ne 0$, it will be difficult 
{(and probably impossible)} to  maintain exactly the first conservation law by methods that compute matrix exponentials.\footnote{
{The situation whereby it is impossible to satisfy several conservation laws under discretisation -- except, of course, by the exact solution -- is familiar in Geometric Numerical Integration \cite{hairer10gni}.}}
This is typical of applications in chemical kinetics, and for example, the famous Michaelis--Menten enzyme kinetics model (which always converges to a unique and  simple steady state) also fits the framework, with a matrix that has the same pattern of signs as a Laplacian, but that does not have zero column sum, and the model still has two simple well-known linear conservation laws.  
Significantly, by numerical simulation, it has been shown that solutions of this model \eqref{eq:Oscillation:model} show oscillations \cite{hadavc2017minimal} (Fig.~5).

\setcounter{equation}{0}
\section{Positivity preserving second-order methods}


Let us first consider the particular case in which the matrix $A$ is constant. Then the exact solution is given via the exponential:
\begin{displaymath}
 {\bf y}(t)=\e^{tA}{\bf y}_0.
\end{displaymath}
 If $A$  is a graph Laplacian matrix it is a consequence of Theorem 2 that $\sigma(\e^{tA})\subset \{z\in\BB{C}\,:\,|z|\leq 1\}$, hence the solution is stable (subject to the discussion of stability we gave earlier,  in the nonsymmetric case).

The exponential of a graph Laplacian matrix is fundamental to the work of this paper, and this calls for a more detailed study of its qualitative properties.

\subsection{The exponential of a graph Laplacian matrix}
{{}{We begin with column sums for an arbitrary square matrix.
\begin{proposition}
  \label{prop:column:sums}
    Suppose  that  ${\bf 1}^\top A={\bf 0}^\top$. Then ${\bf 1}^\top\e^{A}={\bf 1}^\top$.
\end{proposition}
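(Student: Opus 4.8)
The plan is to use the power series definition of the matrix exponential together with the hypothesis that $\mathbf{1}^\top A = \mathbf{0}^\top$. First I would write $\e^A = \sum_{n=0}^\infty A^n / n!$, which converges absolutely for any square matrix $A$, so that left-multiplication by the fixed vector $\mathbf{1}^\top$ may be exchanged with the summation: $\mathbf{1}^\top \e^A = \sum_{n=0}^\infty \mathbf{1}^\top A^n / n!$.

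The key observation is that $\mathbf{1}^\top A^n = \mathbf{0}^\top$ for every $n \geq 1$. This follows by a trivial induction: the base case $n=1$ is the hypothesis, and if $\mathbf{1}^\top A^{n} = \mathbf{0}^\top$ then $\mathbf{1}^\top A^{n+1} = (\mathbf{1}^\top A^{n}) A = \mathbf{0}^\top A = \mathbf{0}^\top$. Hence in the series only the $n=0$ term, namely $\mathbf{1}^\top A^0 / 0! = \mathbf{1}^\top I = \mathbf{1}^\top$, survives, giving $\mathbf{1}^\top \e^A = \mathbf{1}^\top$.

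Alternatively, and perhaps more elegantly, I could argue via the differential equation: let $\mathbf{g}(t)^\top = \mathbf{1}^\top \e^{tA}$, so $\mathbf{g}(0)^\top = \mathbf{1}^\top$ and $\tfrac{\D}{\D t}\mathbf{g}(t)^\top = \mathbf{1}^\top A \e^{tA} = \mathbf{0}^\top$, whence $\mathbf{g}(t)^\top \equiv \mathbf{1}^\top$; evaluating at $t=1$ gives the claim. Either route is essentially immediate.

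Honestly, there is no real obstacle here: the statement is a one-line consequence of the series expansion (or of uniqueness for linear ODEs), and the only thing to be careful about is justifying the interchange of the infinite sum with multiplication by $\mathbf{1}^\top$, which is legitimate because the exponential series converges in any submultiplicative matrix norm. I would present the power-series argument as the main proof since it is the most transparent and self-contained.
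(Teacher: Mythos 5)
Your power-series argument is correct and is essentially identical to the paper's proof, which also expands $\e^A$ as $\sum_{n\ge0}A^n/n!$ and uses $\mathbf{1}^\top A=\mathbf{0}^\top$ to annihilate every term with $n\ge1$. The alternative ODE route you sketch is a valid variant but not needed; no gaps here.
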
}
{{}{
\begin{proof}
By the series definition of the exponential 
\[
{\bf 1}^\top\e^{A}= {\bf 1}^\top \left(\sum_{n=0}^{\infty} \frac{A^n}{n!} \right)= ({\bf 1}^\top I) +  \sum_{n=1}^{\infty}({\bf 1}^\top A) \frac{A^{n-1}}{n!} =  {\bf 1}^\top + \sum_{n=1}^{\infty}({\bf 0}^\top)\frac{A^{n-1}}{n!}  ={\bf 1}^\top. 
\]
\end{proof}}

\noindent
{{}{{\bf Remark.}
Replace $A$ by $tA$ in the Proposition to see that, as a corollary,  if ${\bf 1}^\top A={\bf 0}^\top$, then ${\bf 1}^\top\e^{tA}={\bf 1}^\top$.}
~\\[6pt]
{{}{{\bf Remark.}
Graph Laplacians have the property ${\bf 1}^\top A={\bf 0}^\top$ by definition, so for graph Laplacians it is also true that ${\bf 1}^\top\e^{tA}={\bf 1}^\top$.}

We need the following elements of the {\em Perron--Frobenius theory\/} \cite{berman79nmm} (p.~26--27). Let $B\in\BB{R}^{d\times d}$, $B\succeq O$. Then $\rho(B)$ is an eigenvalue of $B$ and we can choose the corresponding eigenvector ${\bf v}$ such that ${\bf v}\succeq{\bf 0}$. Moreover, if in addition $B$ is {\em irreducible\/} then $\rho(B)$ is a simple eigenvalue and ${\bf v}$ is the only eigenvector of $B$ with nonnegative entries. 

Let $a^*=\min_{i=1,\ldots,d} A_{i,i}<0$ and set $\tilde{A}=A-a^*I$. Then
\begin{displaymath}
  \e^{tA}=\e^{ta^*I+t\tilde{A}}=\e^{ta^*}\e^{t\tilde{A}}.
\end{displaymath}
Since $\tilde{A}\succeq O$, all its nonnegative powers are also nonnegative and we deduce that $\e^{t\tilde{A}}\succeq O$. Therefore $\e^{tA}\succeq O$. 
Indeed,  the Mittag--Leffler matrix function of a graph Laplacian, $E_{\alpha}(A t^{\alpha}) $,  is likewise a stochastic matrix, i.e.  $E_{\alpha}(A t^{\alpha}) \succeq O$, and all entries are positive, and columns sum to unity \cite{macnamara2017fractional}.
 Here the Mittag--Leffler function $E_{\alpha}(z) = \sum_{k=0}^{\infty} z^{k}/\Gamma(\alpha k +1)$  is a one-parameter generalisation of the exponential, and the exponential is recovered as the special case once $\alpha \rightarrow 1$.
 Furthermore, when $A$ is Laplacian then the pattern of signs in the resolvent, and the properties of $M$-matrices, show that for large $n$, all entries of the matrix $\left( I - \frac{t}{n}A \right)^{-n}$ are nonnegative. 
This suggests the results we derive here may be extended to more general settings.

Additionally, once $A$ is irreducible and we denote by ${\bf w}\succeq{\bf 0}$ the left eigenvector of $A$ corresponding to the zero eigenvalue, then ${\bf w}^\top \tilde{A}=-a^*{\bf w}^\top$. Since $a^*<0$, we deduce that $|a^*|=\rho(\tilde{A})$ and ${\bf w}={\bf 1}$. 

\begin{proposition}
  \label{prop:Spectrum}
  \begin{displaymath}
    \sigma(\tilde{A})\subset\{z\in\BB{C}\,:\, |z|\leq |a^*|\}.
  \end{displaymath}
\end{proposition}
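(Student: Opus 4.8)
The plan is to deduce the bound from the Perron--Frobenius facts recalled just above, so that almost nothing genuinely new is needed. First I would observe that for \emph{any} square matrix $B$ one has $\sigma(B)\subseteq\{z\in\BB{C}:|z|\le\rho(B)\}$ by the very definition of the spectral radius; hence it suffices to prove that $\rho(\tilde{A})=|a^*|$ — in fact even $\rho(\tilde{A})\le|a^*|$ would already give the stated inclusion.

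The key algebraic remark is that all column sums of $\tilde{A}=A-a^*I$ equal $|a^*|$: since ${\bf 1}^\top A={\bf 0}^\top$ we get ${\bf 1}^\top\tilde{A}={\bf 1}^\top A-a^*{\bf 1}^\top=-a^*{\bf 1}^\top=|a^*|{\bf 1}^\top$. Now apply Perron--Frobenius to the nonnegative matrix $\tilde{A}$ (recall $\tilde{A}\succeq O$): there is a vector ${\bf v}\succeq{\bf 0}$, ${\bf v}\neq{\bf 0}$, with $\tilde{A}{\bf v}=\rho(\tilde{A}){\bf v}$. Multiplying this identity on the left by ${\bf 1}^\top$ in two ways gives, on the one hand, ${\bf 1}^\top\tilde{A}{\bf v}=({\bf 1}^\top\tilde{A}){\bf v}=|a^*|\,{\bf 1}^\top{\bf v}$ and, on the other hand, ${\bf 1}^\top\tilde{A}{\bf v}=\rho(\tilde{A})\,{\bf 1}^\top{\bf v}$. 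Because ${\bf v}$ is nonnegative and nonzero we have ${\bf 1}^\top{\bf v}=\sum_i v_i>0$, so we may cancel this scalar and conclude $\rho(\tilde{A})=|a^*|$, which proves the proposition — indeed in the sharp form, and with no irreducibility assumption.

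An alternative, purely Gerschgorin route — parallel to the proof of Theorem~\ref{thm:GEvals}, applying the theorem to the columns of $\tilde{A}$ — is also available: the $\ell$th disc is centred at $\tilde{A}_{\ell,\ell}=A_{\ell,\ell}-a^*\ge0$ with radius $\sum_{k\neq\ell}\tilde{A}_{k,\ell}=\sum_{k\neq\ell}A_{k,\ell}=-A_{\ell,\ell}$, so its point farthest from the origin is at distance $(A_{\ell,\ell}-a^*)+(-A_{\ell,\ell})=|a^*|$; hence every disc, and therefore $\sigma(\tilde{A})$, lies in the closed disc of radius $|a^*|$. I expect no real obstacle here, since the statement is essentially a one-line corollary of the material assembled in the preceding paragraph; the only point deserving a word of care is that the cancellation of ${\bf 1}^\top{\bf v}$ uses merely that ${\bf v}\succeq{\bf 0}$ is nonzero, so the argument does not tacitly require $\tilde{A}$ to be irreducible.
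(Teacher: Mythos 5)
Your proposal is correct, and your ``alternative, purely Gerschgorin route'' is in fact exactly the paper's own proof: the authors apply Gerschgorin to the columns of $\tilde{A}$, obtain discs centred at $A_{\ell,\ell}-a^*\geq0$ of radius $\sum_{k\neq\ell}A_{k,\ell}=-A_{\ell,\ell}$, and leave the final ``farthest point is at distance $|a^*|$'' computation implicit (``and the proof follows''), which you have filled in correctly. Your primary argument is genuinely different: instead of localising the spectrum disc by disc, you observe that ${\bf 1}^\top$ is a left eigenvector of $\tilde{A}$ with eigenvalue $-a^*=|a^*|$ and pair it against a nonnegative right Perron eigenvector ${\bf v}$ of $\tilde{A}$; since ${\bf v}\succeq{\bf 0}$, ${\bf v}\neq{\bf 0}$ forces ${\bf 1}^\top{\bf v}>0$, the two eigenvalues must coincide and $\rho(\tilde{A})=|a^*|$. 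This buys you strictly more than the stated proposition: the sharp equality rather than the inclusion, and without irreducibility --- note that the paper itself derives $|a^*|=\rho(\tilde{A})$ only in the paragraph preceding the proposition and only under the assumption that $A$ is irreducible, via the left Perron eigenvector ${\bf w}$. What the Gerschgorin route buys in exchange is elementarity: it uses only the sign pattern and the zero column sums, with no appeal to the Perron--Frobenius existence theorem. Your flagged point of care --- that cancelling ${\bf 1}^\top{\bf v}$ needs only nonnegativity and nonvanishing of ${\bf v}$, not irreducibility --- is exactly right.
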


\begin{proof}
  By the Gerschgorin theorem applied to the columns of $\tilde{A}$ (or the standard Gerschgorin theorem applied to $\tilde{A}^\top$) and because $A_{j,i}\geq0$ for $i\neq j$,  we have
  \begin{displaymath}
    \sigma(\tilde{A})\subset \bigcup_{i=1}^d \left\{z\in\BB{C}\,:\, |z-A_{i,i}+a^*|\leq \sum_{\stackrel{\scriptstyle j=1}{j\neq i}}^d A_{j,i}\right\}
  \end{displaymath}
  and the proof follows. 
\end{proof}


\begin{proposition}
  \label{prop:ExpVec}
  Let $\BB{R}^d\ni{\bf p}\succeq{\bf 0}$ be such that ${\bf 1}^\top {\bf p}=1$ and set ${\bf q}=\e^{tA}{\bf p}$. Then for every $t\geq 0$ ${\bf q}\succeq{\bf 0}$ and ${\bf 1}^\top {\bf q}=1$.
\end{proposition}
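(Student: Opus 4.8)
The plan is to obtain both assertions directly from the structural facts about $\e^{tA}$ already established in this subsection, so that the proposition is essentially a corollary. First I would recall the decomposition $\e^{tA}=\e^{ta^*}\e^{t\tilde A}$ with $\tilde A=A-a^*I$, $a^*=\min_i A_{i,i}<0$: since $\tilde A\succeq O$, every power $\tilde A^n\succeq O$, hence for $t\geq0$ the series $\sum_{n\geq0}t^n\tilde A^n/n!$ has only nonnegative terms, giving $\e^{t\tilde A}\succeq O$ and therefore $\e^{tA}=\e^{ta^*}\e^{t\tilde A}\succeq O$. Applying a nonnegative matrix to the nonnegative vector ${\bf p}$ then yields ${\bf q}=\e^{tA}{\bf p}\succeq{\bf 0}$, which is the positivity half.

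For the mass assertion I would invoke Proposition~\ref{prop:column:sums} in the form of the remark that follows it (replace $A$ by $tA$): because $A$ is a graph Laplacian we have ${\bf 1}^\top A={\bf 0}^\top$, hence ${\bf 1}^\top\e^{tA}={\bf 1}^\top$, and therefore ${\bf 1}^\top{\bf q}={\bf 1}^\top\e^{tA}{\bf p}={\bf 1}^\top{\bf p}=1$.

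I do not expect any real obstacle here: everything needed has been proved above. The only point requiring a little care is that the nonnegativity argument genuinely uses $t\geq0$ -- so that $\e^{ta^*}>0$ and the power-series terms stay nonnegative -- which is precisely the hypothesis of the proposition. As an aside, a self-contained alternative is available: ${\bf q}(t)=\e^{tA}{\bf p}$ solves ${\bf q}'=A{\bf q}$, ${\bf q}(0)={\bf p}$, and one may repeat verbatim the argument of Proposition~\ref{prop:GraphLaplacians} (a component that vanishes at some $t^*$ has nonnegative derivative there, since the off-diagonal entries of $A$ are nonnegative, while ${\bf 1}^\top{\bf q}$ is constant by the same computation); but the corollary-based proof above is shorter.
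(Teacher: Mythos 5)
Your proof is correct and follows essentially the same route as the paper: the positivity half uses the decomposition $\e^{tA}=\e^{ta^*}\e^{t\tilde A}$ with $\tilde A=A-a^*I\succeq O$ established just before the proposition, and the mass half invokes ${\bf 1}^\top\e^{tA}={\bf 1}^\top$ from Proposition~\ref{prop:column:sums}. The paper's proof is merely a terser citation of these same two facts, so no substantive difference.
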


\begin{proof}
  We deduce at once that ${\bf q}\succeq{\bf 0}$ because $\e^{tA}\succ O$. Moreover, ${\bf 1}^\top{\bf q}={\bf 1}^\top \e^{tA}{\bf p}={\bf 1}^\top{\bf p}=1$, concluding the proof.
\end{proof}

\paragraph{Remark.} Note that all previously stated results apply to maps of the form ${\bf z}= \e^{\sigma S}{\bf x}$ where ${\bf x}\succeq {\bf 0}$, $S$ is a graph Laplacian and $\sigma$ is a non-negative constant. Since $S$ can have large negative eigenvalues, taking negative values of $\sigma$ is likely to lead to a poorly conditioned problem where negative solutions can occur and this compels us to avoid this choice. In the sequel we propose several methods that involve maps of the form  $\e^{\sigma S}$ with $S$ being graph Laplacian, and we will see that condition  $\sigma>0$   limits the order of the methods to two in the time step, an order barrier.

{{}{
Since $\e^{tA}=\left[ I - tA \right]^{-1} + {\cal O}(h^2)$, the following well known result will be useful in the sequel.
\begin{proposition}
  \label{prop:InvPositivity}
	If $A$ is graph-Laplacian then  $\left[ I - tA \right]^{-1}\succeq O$.
\end{proposition}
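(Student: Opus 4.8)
The plan is to exploit the fact that $I - tA$ is an $M$-matrix for every $t\ge 0$, and that the inverse of a nonsingular $M$-matrix is nonnegative. Concretely, write $M = I - tA$. By Property~1 of the graph Laplacian, the off-diagonal entries of $A$ are nonnegative and the diagonal entries are nonpositive, so the off-diagonal entries of $M$ are nonpositive and the diagonal entries satisfy $M_{k,k} = 1 - tA_{k,k} \ge 1 > 0$. Thus $M$ has the sign pattern of a $Z$-matrix, i.e.\ $M = sI - B$ with $B \succeq O$ and $s = 1 + t\max_k(-A_{k,k}) \ge 1$.

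Next I would pin down $s$ relative to $\rho(B)$. Here $B = sI - M = (s-1)I + tA = t\tilde A + (s-1-ta^*)I$ in the notation introduced before Proposition~\ref{prop:Spectrum}, but it is cleaner to argue directly: $B \succeq O$, and I claim $\rho(B) < s$. Indeed, Proposition~\ref{prop:column:sums} (or a direct Gerschgorin argument on columns, exactly as in the proof of Proposition~\ref{prop:Spectrum}) shows that $\sigma(A) \subset {\bf cl}\,\BB{C}^-$, so every eigenvalue $\mu$ of $A$ has $\Re\mu \le 0$, hence every eigenvalue of $M = I - tA$ has real part $\ge 1 > 0$, so $M$ is nonsingular and $s - \rho(B) = s - \max\{|\lambda| : \lambda \in \sigma(B)\}$; writing $\lambda = s - (1 - t\mu)$ for $\mu \in \sigma(A)$ and using $\Re(1 - t\mu) \ge 1$ gives $|\lambda| = |s - (1-t\mu)| \le s - \Re(1 - t\mu) \le s - 1 < s$ when $t > 0$ (and when $t = 0$ the claim $M^{-1} = I \succeq O$ is trivial). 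Therefore $\rho(B) < s$, so $M$ is a nonsingular $M$-matrix.

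Finally I would invoke the standard characterisation of nonsingular $M$-matrices (Berman--Plemmons \cite{berman79nmm}, the same reference already used in the paper): if $M = sI - B$ with $B \succeq O$ and $s > \rho(B)$, then $M^{-1} = \sum_{n=0}^{\infty} s^{-n-1} B^n \succeq O$, the series converging because $\rho(B/s) < 1$. Applying this to $M = I - tA$ yields $[I - tA]^{-1} \succeq O$, which is the assertion. Since the paper describes this as "well known" and has already set up the Perron--Frobenius/$M$-matrix machinery, I expect the whole argument to be short; the only point requiring a modicum of care is the inequality $\rho(B) < s$, i.e.\ verifying nonsingularity of $I - tA$, which is where the zero-real-part-eigenvalue conclusion of Gerschgorin (essentially Theorem~\ref{thm:GEvals}) does the real work.
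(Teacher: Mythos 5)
Your overall route is the same as the paper's: recognise $M=I-tA$ as a $Z$-matrix, show it is a nonsingular $M$-matrix, and conclude $M^{-1}\succeq O$; the paper simply cites the eigenvalue characterisation ($Z$-matrix with $\sigma(M)\subset\BB{C}^+$) where you instead try to verify $s>\rho(B)$ by hand. Unfortunately the hand verification contains a false step: the inequality $|s-(1-t\mu)|\leq s-\Re(1-t\mu)$ does not hold for non-real $\mu$ (for $z=1-t\mu$ with $\Im z\neq0$ one has $|s-z|=\sqrt{(s-\Re z)^2+(\Im z)^2}>s-\Re z$), and nonsymmetric graph Laplacians genuinely have non-real eigenvalues. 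For instance the cyclic Laplacian with $A_{k,k}=-1$, $A_{k+1,k}=1\pmod 3$ has $\mu=-\frac32\pm\frac{\sqrt3}{2}\ii$; there $s=1+t$, $\lambda=s-(1-t\mu)$ has $|\lambda|=t$ while $s-\Re(1-t\mu)=-t/2<0$, so your chain of inequalities breaks down even though the conclusion $\rho(B)<s$ is still true.

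The gap is easily repaired within your own framework, in either of two ways. (i) Since $B\succeq O$, Perron--Frobenius guarantees that $\rho(B)$ is itself an eigenvalue of $B$, hence real; writing $\rho(B)=(s-1)+t\mu^*$ forces $\mu^*\in\sigma(A)$ to be real, and $\mu^*\leq 0$ then gives $\rho(B)\leq s-1<s$ directly. (ii) Alternatively, apply Gerschgorin by columns to $B=(s-1)I+tA$ itself: each disc is centred at $(s-1)+tA_{\ell,\ell}\geq0$ with radius $-tA_{\ell,\ell}$, so every disc lies in $\{|z|\leq s-1\}$ and again $\rho(B)\leq s-1<s$. Using only the information $\Re\mu\leq0$ is not enough to bound a modulus; you need either the realness of the Perron root or the full disc containment. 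With that one step corrected, the rest of your argument (the Neumann series $M^{-1}=\sum_{n\geq0}s^{-n-1}B^n\succeq O$) is fine and is in fact slightly more self-contained than the paper's appeal to the Berman--Plemmons characterisation.
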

\begin{proof}
Given $B=I - tA $ we have that $B_{ii}>0, \forall i$ and $B_{i,j}\leq 0, \ i\neq j$. Since $\sigma(A)\setminus\{0\}\in\BB{C}^-$ then we have $\sigma(I - tA)\in\BB{C}^+$, which is an $M$-matrix whose inverse has only non negative elements. 
\end{proof}
}

\subsection{Splitting methods}


Splitting methods are frequently used to solve differential equations that are separable into solvable parts. However, for stiff as well as for non separable problems it is more convenient to proceed as follows \cite{blanes19otc}.
Let us  consider the following system in the extended space
\begin{eqnarray*}
 {\bf x}' &\!\!\!=\!\!\!& A({\bf z}){\bf x}, \qquad {\bf x}(0)={\bf x}_0={\bf y}_0,\\
 {\bf z}' &\!\!\!=\!\!\!& A({\bf x}){\bf z}, \qquad {\bf z}(0)={\bf z}_0={\bf y}_0,
\end{eqnarray*}
where  ${\bf x}(t)={\bf y}(t)={\bf z}(t)$. The system is separable into two solvable parts
\begin{displaymath}
  {\cal A}:\quad\left\{
  \begin{array}{rcl}
   {\bf x}' &\!\!\!=\!\!\!& A({\bf z}){\bf x}, \\[2pt]
   {\bf z}' &\!\!\!=\!\!\!& 0
  \end{array}
  \right.
   \qquad \Rightarrow \qquad 
  \left\{
  \begin{array}{rcl}
    {\bf x}(t) &\!\!\!=\!\!\!& \e^{t A({\bf z}_0)}{\bf x}_0,\\[2pt]
    {\bf z}(t) &\!\!\!=\!\!\!& {\bf z}_0
   \end{array}
   \right.
\end{displaymath}
and
\begin{displaymath}
  {\cal B}:\quad\left\{
  \begin{array}{rcl}
   {\bf x}' &\!\!\!=\!\!\!& 0 , \\[2pt]
   {\bf z}' &\!\!\!=\!\!\!& A({\bf x}){\bf z}
  \end{array}
  \right.
  \qquad \Rightarrow \qquad 
  \left\{
  \begin{array}{rcl}
   {\bf x}(t) &\!\!\!=\!\!\!& {\bf x}_0,\\[2pt]
   {\bf z}(t) &\!\!\!=\!\!\!& \e^{t A({\bf x}_0)}{\bf z}_0.
  \end{array}
  \right.
\end{displaymath}

We solve the system with the symmetric second order Strang splitting method,
i.e.\  advance half a step with ${\cal A}$ followed by a step with ${\cal B}$ and  conclude with another half a step with ${\cal A}$:
\begin{eqnarray}
 {\bf x}_{1/2} &\!\!\!=\!\!\!& \e^{\frac12 hA({\bf z}_0)}{\bf x}_0, \nonumber \\
 {\bf z}_1 &\!\!\!=\!\!\!& \e^{hA({\bf x}_{1/2})}{\bf z}_0,   \label{eq:SplittingMethod} \\
 {\bf x}_{1} &\!\!\!=\!\!\!& \e^{\frac12 hA({\bf z}_1)}{\bf x}_{1/2} .  \nonumber 
\end{eqnarray}

Since ${\bf z}_0\succeq 0$,  the frozen matrix $A({\bf z}_0)$ is a graph Laplacian, therefore ${\bf x}_{1/2}\succeq 0$ and preserves the 1-norm, and similarly for ${\bf z}_1$ and ${\bf x}_1$.

In addition,  ${\bf x}_1$ and ${\bf z}_1$ correspond to symmetric second order approximations: ${\bf z}_1$ can be seen as the exponential midpoint and ${\bf x}_1$ as the exponential trapezoidal rule. 
Then, we can advance the solution either with ${\bf z}_1$ or with ${\bf x}_1$ but, in general,  more accurate results are obtained with the smoothing technique, i.e.\ taking the solution for the next step as the average
\begin{equation} \label{eq:smoothing} 
  {\bf y}_1=\frac12({\bf x}_1+{\bf z}_1)
\end{equation}
where again the 1-norm is preserved and all components of ${\bf y}_1$ are nonnegative. The Lie group structure is not preserved by this linear combination, but this is not a property that concerns us in the present context. 
In addition, the difference ${\bf x}_1-{\bf z}_1$ can be taken as an estimate of local error, using the scheme as a variable time-step algorithm in order to get more accurate results.

\paragraph{Remark.} If $A$ is graph Laplacian and irreducible then \eqref{eq:SplittingMethod} is a time-symmetric second order method that preserves mass and positivity unconditionally (the average \eqref{eq:smoothing} breaks  time symmetry) and converges to the steady state solution.
{{}{Let $\yy_f$ be the steady state solution, then ${\bf f}(\yy_f)=A(\yy_f)\yy_f={\bf 0}$. Since $\sigma(A)\setminus\{0\}\in\BB{C}^-$ where  0 is a simple eigenvalue and the method is a composition of exponentials of $A$, it must converge to a steady state solution, say $\hat\yy_f$. However, we observe that if we take $\yy_0=\yy_f$ then
it is trivial to check that ${\bf x}_{1/2}={\bf z}_1={\bf x}_{1}=\yy_f$, so $\yy_1=\yy_f$ and then $\hat\yy_f=\yy_f$.
}

{{}{
Note also that
\[
 {\bf u} = \left[ I - \frac12 hA({\bf z}_0) \right]^{-1}{\bf x}_0 =  \e^{\frac12 hA({\bf z}_0)}{\bf x}_0  + {\cal O}(h^2)
\]
which is a first order approximation to the exponential that, by  our earlier  Proposition~\ref{prop:InvPositivity}, still preserves positivity. We can replace  $A({\bf x}_{1/2})$ by $A({\bf u})$ 
in \eqref{eq:SplittingMethod} and, since this matrix is multiplied by $h$, the method retains  second order of accuracy. 
}

\

\paragraph{The non-autonomous case.}
Let us now consider the non-autonomous system
\begin{displaymath}
  {\bf y}' = A(t,{\bf y}) {\bf y}, \qquad {\bf y}(0)={\bf y}_0.
\end{displaymath} 
This occurs, for example, when a chemical reaction  takes place at variable temperature and the coefficients $k_i(t)$ are time dependent or when the parameter $R_0$ in the SIR model changes due to political decisions, variations in behaviour or the evolution of a pathogen.

In this case we duplicate the system, but taking the time as two dependent variables
\begin{displaymath}
\begin{array}{rcll}
 {\bf x}' &\!\!\!=\!\!\!& A(z_t,{\bf z}){\bf x}, \qquad &{\bf x}(0)={\bf x}_0={\bf y}_0\\
 x_t' &\!\!\!=\!\!\!& 1, \qquad &x_t(0)=t_0\\[2pt]
 {\bf z}' &\!\!\!=\!\!\!& A(x_t,{\bf x}){\bf z}, \qquad &{\bf z}(0)={\bf z}_0={\bf y}_0\\
 z_t' &\!\!\!=\!\!\!& 1, \qquad &z_t(0)=t_0
\end{array}
\end{displaymath}
where  ${\bf x}(t)={\bf y}(t)={\bf z}(t)$ and $x_t(t)=z_t(t)=t$: the system is now autonomous and separable into solvable parts: the outcome is an algorithm similar to \eqref{eq:SplittingMethod},
\begin{eqnarray*}
 {\bf x}_{1/2} &\!\!\!=\!\!\!& \e^{\frac{h}2A(t_0,{\bf z}_0)}{\bf x}_0, \\
 {\bf z}_1 &\!\!\!=\!\!\!& \e^{hA(t_0+h/2,{\bf x}_{1/2})}{\bf z}_0,   \\
 {\bf x}_{1} &\!\!\!=\!\!\!& \e^{\frac{h}2A(t_0+h,{\bf z}_1)}{\bf x}_{1/2} ,  
\end{eqnarray*}
and finally
\begin{equation} \label{eq:smoothing:nonaut} 
  {\bf y}_1=\frac12({\bf x}_1+{\bf z}_1).
\end{equation}

\subsection{Magnus integrators}

A more general procedure to construct higher-order methods is to consider Magnus integrators.

Let $A:\BB{R}\times \BB{R}^d\rightarrow\BB{R}^{d\times d}$. We consider the equation
\begin{equation}
  \label{eq:GrLapODE}
  \yy '=A(t,\yy )\yy ,\quad t\geq 0,\qquad\yy (0)= \yy _0,
\end{equation}
and suppose that $\yy _n\simeq\yy (t_n)$. One approach toward the solution of \eqref{eq:GrLapODE} is
\begin{eqnarray}
  \nonumber
  \yy ^{[0]}  &\equiv&\yy _n,\\
  \nonumber
  {\yy ^{[m+1]}}'& = &A(t,\yy ^{[m]}(t))\yy ^{[m+1]},\quad \yy ^{[m+1]}(t_n)=\yy _n,\qquad m=0,1,\ldots,m^*-1,\\
  \yy _{n+1}& = &\yy ^{[m^*]\,}\!(t_{n+1}),\qquad \mbox{where}\qquad t_{n+1}=t_n+h_n,
  \label{eq:Iterate}
\end{eqnarray}
that corresponds to an approximation to the exact solution to order $m^*$.
The linear ODE in \eqref{eq:Iterate} can be solved e.g.\ by Magnus series expansion \cite{magnus54ote}
 (see also \cite{blanes09tme,iserles00lgm,iserles99ots}
 and references therein). 
{{}{For simplicity, we first consider the autonomous case, with $A(\yy)$,  and next we show the results for the non-autonomous problem.}

For example, for $m^*=1$ we have ${\yy ^{[1]}}'=A(\yy _n)\yy ^{[1]}$, therefore $\yy ^{[1]}(t)=\e^{(t-t_n)A(\yy _n)}\yy _n$  and 
we obtain the first-order method
\begin{equation}
  \label{eq:ExpoRep}
  \yy _{n+1}=\e^{h_nA(\yy _n)}\yy _n.
\end{equation}
Letting $m^*=2$ leads to a second-order method ${\yy ^{[2]}}'=A(\e^{(t-t_n)A(\yy _n)}\yy _n)\yy ^{[2]}$, whose Magnus solution truncated to the first term that provides second order approximations in the time step is
\begin{eqnarray*}
  \yy ^{[2]}(t)&\!\!\!=\!\!\!&\exp\!\left(\int_{t_n}^t A(\e^{(\tau-t_n)A(\yy _n)}\yy _n)\D\tau\right)\!\yy _n\\
  &\!\!\!\approx\!\!\!&\exp\!\left(\frac{t-t_n}{2}\left(A(\yy _n)+A(\e^{(t-t_n)A(\yy _n)}\yy _n)\right)\!\!\right)\!\yy _n
\end{eqnarray*}
(note that the approximation of the integral with the trapezoidal rule is fully consistent with second order). 
 This results in the second-order method
\begin{equation}
  \label{eq:ExpoRepa}
  \yy _{n+1}=\exp\!\left(\frac{h_n}{2}\left(A(\yy _n)+A(\e^{h_nA(\yy _n)}\!\yy _n)\right)\!\right)\!\yy _n.
\end{equation}

If we consider instead the midpoint rule we have
\begin{equation}
  \label{eq:ExpoRepb}
  \yy _{n+1}= \exp\!\left(h_nA(\e^{\frac12 h_nA(\yy _n)}\yy _n)\right)\yy _n.
\end{equation}
Note that \eqref{eq:ExpoRepb} coincides with ${\bf z}_1$ in \eqref{eq:SplittingMethod} for the first step. This method requires only two exponentials but it is not time symmetric. If it is important to preserve time symmetry,  the three-exponential method  \eqref{eq:SplittingMethod} should be used, otherwise this simple and cheaper scheme suffices.

\paragraph{Remark.} If $A$ is graph Laplacian and irreducible then the first-order methods \eqref{eq:ExpoRep} as well as the second order methods \eqref{eq:ExpoRepa} and \eqref{eq:ExpoRepb} preserve mass and positivity unconditionally and converge to the steady state solution 
{{}{similarly to the previous splitting methods}.


\

We can easily apply these Magnus integrators to  non-autonomous problems.
The first-order method is, obviously 
\begin{equation}
  \label{eq:1.3t}
  \yy _{n+1}=\e^{h_nA(t_n,\yy _n)}\yy _n.
\end{equation}
The trapezoidal second-order method is given by
\begin{equation}
  \label{eq:1.3at}
  \yy _{n+1}=\exp\!\left(\frac{h_n}{2}\left(A(t_n,\yy _n)+A(t_{n+1},\e^{h_nA(t_n,\yy _n)}\!\yy _n)\right)\!\right)\!\yy _n,
\end{equation}
while the corresponding second-order midpoint rule method is
\begin{equation}
  \label{eq:1.3bt}
  \yy _{n+1}= \exp\!\left(h_nA(t_n+\frac{h_n}{2},\e^{\frac12 h_nA(t_n\yy _n)}\yy _n)\right)\yy _n.
\end{equation}

{{}{Note that if we consider the first order approximation to $\e^{h_nA(t_n,\yy _n)}$ or $\e^{\frac12 h_nA(t_n\yy _n)}$ given by
\[
  {\bf u}_1=[I-h_nA(t_n,\yy _n)]^{-1} \yy_n
	  \qquad \mbox{or} \qquad
  {\bf u}_2=[I-\frac{h_n}{2}A(t_n,\yy _n)]^{-1} \yy_n
\]
as the internal stages in \eqref{eq:1.3at} or \eqref{eq:1.3bt} then the new and cheaper schemes read
\begin{equation}
  \label{eq:1.3atN}
  \yy _{n+1}=\exp\!\left(\frac{h_n}{2}\left(A(t_n,\yy _n)+A(t_{n+1},{\bf u}_1)\right)\!\right)\!\yy _n,
\end{equation}
or
\begin{equation}
  \label{eq:1.3btN}
  \yy _{n+1}= \exp\!\left(h_nA(t_n+\frac{h_n}{2},{\bf u}_2)\right)\yy _n.
\end{equation}
and still preserve mass and positivity as well as the second order accuracy. We can either compute the exponentials to high accuracy, or to look for cheaper approximations that preserve both mass and positivity, this being  a problem to be studied further in the future.
}

\subsection{Patankar methods}

A well-known approach toward preservation of mass and positivity are  Runge--Kutta--Patankar methods \cite{burchard03hoc,kopecz18ooc,kopecz2018upa,offner2020aho,patankar80nht}. The idea is to use Runge--Kutta-like methods for {\em pro\-duc\-tion--destruction systems\/} in chemical kinetics, of the form 
\begin{equation}\label{eq:ProdDestruc}
  y_k'=\sum_{j=1}^d p_{k,j}(t,{\bf y})-\sum_{j=1}^d d_{k,j}(t,{\bf y}),\qquad k=1,\ldots,d,
\end{equation}
where $p_{k,j}(t,{\bf y}),d_{k,j}(t,{\bf y})\geq0$. 
The first order Patankar method is given by
\[
 y_{n+1,k}=y_{n,k}+h\sum_{j=1}^d \left[p_{k,j}(t_n,{\bf y}_n)-d_{k,j}(t_n,{\bf y}_n)\frac{y_{n+1,k}}{y_{n,k}}\right]\!,\qquad k=1,\ldots,d.
\]
This method preserves positivity but does not preserve mass. 
In \cite{burchard03hoc} the authors propose a Modified Patankar Euler scheme (MPE) given by
\begin{equation}\label{eq:Patankar1}
 y_{n+1,k}=y_{n,k}+h\sum_{j=1}^d \left[p_{k,j}(t_n,{\bf y}_n)\frac{y_{n+1,j}}{y_{n,j}}-d_{k,j}(t_n,{\bf y}_n)\frac{y_{n+1,k}}{y_{n,k}}\right]\!,\qquad k=1,\ldots,d,
\end{equation}
which preserves both mass and positivity unconditionally. 
Note that we can write \eqref{eq:ProdDestruc} in our graph-Laplacian notation as
\begin{equation*}\label{}
  {\bf y}' = {\bf f}({\bf y})=  A(t,{\bf y}) {\bf y}
\end{equation*}
with
\[ 
  A_{k,j}(t,{\bf y}) = p_{k,j}(t,{\bf y})\frac1{y_{j}}-d_{k,j}(t,{\bf y})\frac{\delta_{k,j}}{y_{k}}
\]
where $\delta_{k,j}=0$ if $k\neq j$ and  $\delta_{k,k}=1$. 
Then, \eqref{eq:Patankar1} can be written in matrix form as 
\begin{equation}\label{modPatankarEuler}
{\bf y}_{n+1}={\bf y}_n + hA(t_n,{\bf y}_n) {\bf y}_{n+1}
\end{equation}
that preserves mass (because $A(t_n,{\bf y}_n)$ is graph Laplacian) and as already shown preserves positivity.
Note that
\[
{\bf y}_{n+1}=[I - hA(t_n,{\bf y}_n)]^{-1} {\bf y}_{n}= e^{hA(t_n,{\bf y}_n)}{\bf y}_n
+ {\cal O}(h^2),
\]
corresponding to a first order rational approximation to the first order exponential Magnus integrator. 


The second-order Modified Patankar--Runge--Kutta scheme (MPRK) is given by 
\begin{eqnarray*}
  u_k          &=&  y_{n,k}+h\sum_{j=1}^d \left[p_{k,j}(t_n,{\bf y}_n)\frac{u_j}{y_{n,j}}-d_{k,j}(t_n,{\bf y}_n)\frac{u_k}{y_{n,k}}\right]\!,\qquad k=1,\ldots,d,\\
  y_{n+1,k} &=&  y_{n,k}+\frac{h}{2}\sum_{j=1}^d    \{  [p_{k,j}(t_n,{\bf y}_n)+p_{k,j}(t_{n+1},{\bf u})]\frac{y_{n+1,j}}{u_j} -  \\
                  & &  \;\;\;\;\;\;\;\;\;\;\;\;\;\;\;\;\;\;\;\;\; [d_{k,j}(t_n,{\bf y}_n)+d_{k,j}(t_{n+1},{\bf u})] \frac{y_{n+1,k}}{u_k} \}
\end{eqnarray*}
\cite{burchard03hoc} (eq.~27), which can be written in matrix form as 
\begin{eqnarray}
{\bf u} &\!\!\!=\!\!\!& [ I - hA(t_n,{\bf y}_n)]^{-1} {\bf y}_{n} \nonumber  \\
{\bf y}_{n+1} &\!\!\!=\!\!\!& \left[ I - \frac{h}{2}\Big(A(t_n,{\bf y}_n)D({\bf y}_n,{\bf u}) +  A(t_{n+1},{\bf u}) \Big) \right]^{-1}   {\bf y}_{n} 
\end{eqnarray}
where
\[
  D({\bf y}_n,{\bf u}) = {\rm diag}\left(\frac{y_{n,1}}{u_1},\ldots,\frac{y_{n,d}}{u_d}\right)
\]
and ${\bf y}_n=(y_{n,1},\ldots,y_{n,d})^\top$, ${\bf u}=({u_1},\ldots,u_{d})^\top$.

{{}{Note that ${\bf u}$ coincides with  ${\bf u}_1$ in \eqref{eq:1.3at} and then the modified Patankar method can be considered as a particular second order approximation to the second order Magnus method \eqref{eq:1.3at}. Obviously, different second order approximations to this exponential or to the method using the midpoint rule \eqref{eq:1.3bt} would lead to different modified second order Patankar methods.
}

Note that during the integration some of the values $u_i$ may approach zero. 
In this case one may take, for example, $\frac{y_{n,i}}{u_i}=0$ when $u_i$ is smaller than a given tolerance. Some caution is required if any component of the solution is very close to zero and suddenly grows, as it happens with some of the numerical examples we will consider.


This method has shown a good performance on stiff problems \cite{burchard2005aom}. Higher order modified Patankar methods have also been recently obtained in the literature \cite{formaggia2011pac,kopecz2018upa,offner2020aho} and it would be interesting to find if there is any connection with our exponential integrators.

{{}{There are other families of methods which consider some kind of adaptive time steps which depend on the phase space and the time step which allows to preserve positivity as well as the linear invariants \cite{avila2020act,broekhuizen2008aia,martiradonna2020ggc}
 but they are not considered in this work and a proper study of their performance with respect to the new methods is left for  future research.}



\subsection{Higher order methods}

Continuing in this vain, 
\begin{displaymath}
  {\yy^{[3]}}'=A\!\left(\exp\!\left(\int_{t_n}^t A(\e^{(\tau-t_n)A(\yy _n)}\yy _n)\D\tau\!\right)\!\yy _n\!\right)\!\yy ^{[3]}=A_2(t)\yy ^{[3]}
\end{displaymath}
and a fourth-order Magnus reads (this is a 4th-order approximation to $\yy^{[3]}$ which is a third order approximation to the exact solution, so the methods will be of order three)
\begin{displaymath}
  \yy _{n+1}=\exp\!\left(\int_{t_n}^{t_{n+1}} A_2(\tau)\D\tau -\frac12 \int_{t_n}^{t_{n+1}} \int_{t_n}^\tau [A_2(\tau),A_2(\eta)]\D\eta\D\tau\right)\!\yy _n.
\end{displaymath}
The temptation is now to discretise using standard Magnus quadrature at Gauss--Legendre points but this does not work because the definition of $A_2$ itself contains an integral. Moreover,  the critical issue is the dependence of $A_2$ on $t$, not on $\yy _n$. 

We approximate
\begin{displaymath}
  \yy _{n+1}\approx \exp\!\left(\frac{h_n}{2} (\mathcal{B}_1+\mathcal{B}_2)+\frac{\sqrt{3}}{12} h_n^2 [\mathcal{B}_1,\mathcal{B}_2]\right)\!\yy _n,
\end{displaymath}
where
\begin{displaymath}
  \mathcal{B}_1=A_2(t_n+(\Frac12-\Frac{\sqrt{3}}{6})h),\qquad \mathcal{B}_2=A_2(t_n+(\Frac12+\Frac{\sqrt{3}}{6})h)
\end{displaymath}
-- except that $A_2$ itself has a built-in integral,
\begin{displaymath}
  A_2(t)=A\!\left(\exp\!\left(\int_{t_n}^t A_1(\eta)d\eta\right)\!\yy _n\right)\!.
\end{displaymath} 
The simplest solution is to approximate that integral also by two-point Gauss--Legendre (note that the interval of integration in the inner integral is of length $(\frac12-\frac{\sqrt{3}}{6})h_n$ and we need to adjust quadrature points), whereby
\begin{eqnarray*}
  \mathcal{B}_1&\!\!\!\approx\!\!\!& A\!\left(\exp\!\left((\Frac14-\Frac{\sqrt{3}}{12})h_n\! \left[A_1(t_n+(\Frac13-\Frac{\sqrt{3}}{6} )h_n) + A_1(t_n+\Frac16 h_n)\right]\!\right)\!\yy _n\right)\!,\\
  \mathcal{B}_2&\!\!\!\approx\!\!\!& A\!\left(\exp\!\left((\Frac14+\Frac{\sqrt{3}}{12})h_n\! \left[ A_1(t_n +\Frac16 h_n)+A_1(t_n+(\Frac13+\Frac{\sqrt{3}}{6})h_n)\right]\!\right)\yy _n\!\right)\!.
\end{eqnarray*}
Brief explanation: the first integral is in the interval $[t_n,t_n+(\frac12-\frac{\sqrt{3}}{6})]$ and the Gauss--Legendre nodes $\frac12\pm\frac{\sqrt{3}}{6}$ need be multiplied by the length of the interval. Ditto in the second interval, $[t_n,t_n+(\frac12+\frac{\sqrt{3}}{6})]$ and we are saved a single function evaluation because, by happy coincidence, real numbers commute and $(\frac12-\frac{\sqrt{3}}{6})(\frac12+\frac{\sqrt{3}}{6})=(\frac12+\frac{\sqrt{3}}{6})(\frac12-\frac{\sqrt{3}}{6})=\frac16$.

Thus, altogether we need three function evaluations, one more than standard Magnus. Note moreover that the integration in $A_1$ is explicit,
\begin{displaymath}
  A_1(t)=A(\e^{(t-t_n)A(\yy _n)}).
\end{displaymath}
We observe that $\mathcal{B}_i, \ i=1,2$, are graph Laplacians, but this need not be the case for their commutator $[\mathcal{B}_1,\mathcal{B}_2]$. This problem can be bypassed using commutator-free Magnus integrators \cite{alvermann11hoc,blanes17hoc}.

~\\

\paragraph{Commutator-free Magnus integrators} We describe briefly, using an example, the construction of commutation-free integrators, based upon the work of \cite{blanes17hoc}. We approximate the solution across a single time step by
\begin{displaymath}
  \yy _{n+1}\approx 
	\exp\!\left(\frac{h_n}{2} (\beta \mathcal{B}_2+\alpha \mathcal{B}_1)\right)
	\exp\!\left(\frac{h_n}{2} (\alpha \mathcal{B}_2+\beta \mathcal{B}_1)\right)
	\!\yy _n,
\end{displaymath}
where
\begin{displaymath}
 \alpha = \frac12+\frac{\sqrt{3}}{3}, \qquad
 \beta = \frac12-\frac{\sqrt{3}}{3}
\end{displaymath}
and the algorithm is given by
\begin{eqnarray}
  {\bf x}_1 &\!\!\!=\!\!\!& \exp\!\left((\Frac13-\Frac{\sqrt{3}}{6} )h_nA(\yy _n)\right)\!\yy _n, \hspace*{50pt}  A_{1,1}=A({\bf x}_1),  \nonumber  	\\
  {\bf x}_2 &\!\!\!=\!\!\!& \exp\!\left(\Frac1{6} h_nA(\yy _n)\right)\!\yy _n, 	\hspace*{85pt}  A_{1,2}=A({\bf x}_2),  \nonumber  	\\
  {\bf x}_3 &\!\!\!=\!\!\! & \exp\!\left(\Frac13+\Frac{\sqrt{3}}{6} )h_nA(\yy _n)\right)\!\yy _n,	\hspace*{53pt}  A_{1,3}=A({\bf x}_3),  \nonumber  	\\
  {\bf x}_4 &\!\!\!=\!\!\!& \exp\!\left((\Frac14-\Frac{\sqrt{3}}{12} )h_n(A_{1,1}+A_{1,2})\right)\!\yy _n, \hspace*{20pt}  \mathcal{B}_1=A({\bf x}_4),  \nonumber  	\\
  {\bf x}_5 &\!\!\!=\!\!\!& \exp\!\left((\Frac14+\Frac{\sqrt{3}}{12} )h_n(A_{1,2}+A_{1,3})\right)\!\yy _n, \hspace*{20pt}  \mathcal{B}_2=A({\bf x}_5),  \nonumber  	\\
  {\bf x}_6 &\!\!\!=\!\!\!& \exp\!\left(\Frac12 h_n (\alpha \mathcal{B}_2+\beta \mathcal{B}_1)\right)\!\yy _n,  \nonumber  \\
  \yy _{n+1} &\!\!\!=\!\!\! & 	\exp\!\left(\Frac12 h_n (\beta \mathcal{B}_2+\alpha \mathcal{B}_1)\right)\!{\bf x}_6. \label{eq:CF3}
\end{eqnarray}

This is a seven-exponential method that might be useful when highly accurate results are desired and the cost of each exponential is not excessive. It preserves positivity for moderately stiff problems since it is conditionally positivity preserving. 
{{}{
If $\yy_n\succeq {\bf 0}$ then it is easy to see that  ${\bf x}_i\succeq {\bf 0}, \ i=1,2,3,4,5$ since $A_{1,1},A_{1,2},A_{1,3},\mathcal{B}_1,\mathcal{B}_2\in {\cal L}_d$. However,  positivity is guaranteed}
as long as the matrices
\begin{displaymath}
   \alpha \mathcal{B}_2+\beta \mathcal{B}_1, \qquad
	\beta \mathcal{B}_2+\alpha \mathcal{B}_1
\end{displaymath}
are graph Laplacians \cite{macnamara20sob}. Unfortunately,  $\beta<0$ but $\alpha/|\beta|=7+4\sqrt{3}\simeq 14$, and unless $\mathcal{B}_1, \mathcal{B}_2$  drastically change in a short time interval or their sparsity structure is `unlucky', their linear combinations  are likely to inherit graph-Laplacian structure. In other words, while preservation of graph Laplacians for this third-order method is not assured, it is highly likely in practice.

Finally, we present this Magnus integrator to be used on non-autonomous problems.
A third-order commutator-free method can be obtained following the same approximations as previously and taking, for example, the midpoint rule when approximating the intermediate integrals that  ensure the third order of accuracy for the method, resulting in the following algorithm
\[
A_1=A( t_n+(\Frac16-\Frac{\sqrt{3}}{12}) h_n,\yy _n), \quad\! 
		 A_2=A( t_n+\Frac1{12} h_n,\yy _n)  , \quad\!
		 A_3=A( t_n-(\Frac16-\Frac{\sqrt{3}}{12}) h_n,\yy _n),
\]
\begin{eqnarray}
  {\bf x}_1 &\!\!\!=\!\!\!& \exp\!\left((\Frac13-\Frac{\sqrt{3}}{6} )h_nA_1\right)\yy _n,  \hspace*{50pt}   A_{1,1}=A( t_n+(\Frac13-\Frac{\sqrt{3}}{6}) h_n,{\bf x}_1)  \nonumber  	\\
  {\bf x}_2 &\!\!\!=\!\!\!& \exp\!\left(\Frac1{6} h_nA_2\right)\yy _n, 	\hspace*{85pt}      A_{1,2}=A( t_n+\Frac1{6} h_n,{\bf x}_2)   \nonumber  	\\
  {\bf x}_3 &\!\!\!=\!\!\! & \exp\!\left((\Frac13+\Frac{\sqrt{3}}{6} )h_nA_3\right)\yy _n,	\hspace*{53pt}  A_{1,3}=A(t_n+(\Frac13+\Frac{\sqrt{3}}{6} )h_n,{\bf x}_3)  \nonumber  	\\
  {\bf x}_4 &\!\!\!=\!\!\!& \exp\!\left((\Frac14-\Frac{\sqrt{3}}{12} )h_n(A_{1,1}+A_{1,2})\right)\yy _n, \hspace*{20pt}  \mathcal{B}_1=A(t_n+(\Frac12-\Frac{\sqrt{3}}{6} )h_n,{\bf x}_4)  \nonumber  	\\
  {\bf x}_5 &\!\!\!=\!\!\!& \exp\!\left((\Frac14+\Frac{\sqrt{3}}{12} )h_n(A_{1,2}+A_{1,3})\right)\yy _n, \hspace*{20pt}  \mathcal{B}_2=A(t_n+(\Frac12+\Frac{\sqrt{3}}{6} )h_n,{\bf x}_5)  \nonumber  	\\
  {\bf x}_6 &\!\!\!=\!\!\!& \exp\!\left(\Frac12 h_n (\alpha \mathcal{B}_2+\beta \mathcal{B}_1)\right)\yy _n,  \nonumber  \\
  \yy _{n+1} &\!\!\!=\!\!\! & 	\exp\!\left(\Frac12 h_n (\beta \mathcal{B}_2+\alpha \mathcal{B}_1)\right){\bf x}_6. \label{eq:CF3t}
\end{eqnarray}

\setcounter{equation}{0}
\setcounter{figure}{0}
\section{Hidden graph Laplacian structures for polynomial ODEs}

\subsection{The recovery of graph Laplacian structure}

Given an ODE system of the form
\begin{equation}
  \label{eq:PolyODE}
  y_k'=\sum_{\ell=1}^d b^\ell_k y_\ell +\sum_{\ell=1}^d \sum_{i=1}^d a^{\ell,i}_k y_\ell y_i,\qquad k=1,\ldots,d,
\end{equation}
with suitable  initial conditions ${\bf y}(0)\succeq{\bf 0}$, ${\bf 1}^\top{\bf y}(0)=1$, we seek conditions so that it can be written in the form \eqref{eq:ODE2}, where the matrix $A({\bf y})$ is a graph Laplacian, namely that for every nonnegative ${\bf y}$ such that ${\bf 1}^\top{\bf y}=1$ it is true that $A_{k,k}({\bf y})\leq0$ and $A_{k,\ell}({\bf y)}\geq0$, $\ell\neq k$. Moreover, we seek constructive means of deriving such a matrix $A$, given \eqref{eq:PolyODE}.

Our first observation is that the representation of \eqref{eq:PolyODE} in the form \eqref{eq:ODE2} is additive, in the sense that if we can do so for two different right-hand sides of \eqref{eq:PolyODE}, we can do so for their sum. By the same token, if we can do so separately for the first sum and the second, double sum in \eqref{eq:PolyODE}, all we need is simply add the two representations. The first sum is trivial and corresponds to the constant-matrix representation ${\bf y}'=B{\bf y}$, where $B=(b^\ell_k)$ is a graph Laplacian. Consequently, the task at hand reduces to the derivation of a representation \eqref{eq:ODE2} of the system
\begin{displaymath}
  y_k'=\sum_{\ell=1}^d \sum_{i=1}^d a^{\ell,i}_k y_\ell y_i,\qquad k=1,\ldots,d.
\end{displaymath}

With greater generality, we may just as well consider the multinomial ODE system
\begin{displaymath}
  y_k'=\sum_{j=1}^m \sum_{\stackrel{\scriptstyle \ell_1+\cdots+\ell_d=j}{\ell_1,\ldots,\ell_d\geq0}} a_k^{\ell_1,,\ldots,\ell_d} y_1^{\ell_1}\cdots y_d^{\ell_d} =\sum_{j=1}^m \sum_{|{\mathbf{\ell}}|=j} a_k^{\mathbf{\ell}} {\bf y}^{\mathbf{\ell}},\qquad k=1,\ldots,d,
\end{displaymath}
with initial conditions ${\bf y}(0)={\bf y}_0\succeq{\bf 0}$, ${\bf 1}^\top{\bf y}_0=1$. Again, the challenge is to write it in the form \eqref{eq:ODE2} with a graph Laplacian $A({\bf y})$ and, again, we can use the same argument to split the task at hand into a sum of homogeneous problems of the form
\begin{equation}
  \label{eq:MultiODE2}
  y_k'=\sum_{|\mathbf{\ell}|=j} a^{\mathbf{\ell}}_k {\bf y}^{\mathbf{\ell}},\qquad k=1,\ldots,d
\end{equation}
for $j=2,\ldots,m$ -- the case $j=1$ is trivial. 

The problem, though, is that \eqref{eq:MultiODE2} can be written in the form \eqref{eq:ODE2} in a multitude of ways -- indeed, even the coefficients $a^{\mathbf{\ell}}_k$ are not unique. This can be seen in the simplest nontrivial case, $d=2$ and $j=2$:
\begin{eqnarray*}
  &&y_1'=a^{1,1}_1 y_1^2+(a^{1,2}_1+a^{2,1}_1)y_1y_2+a^{2,2}_1y_2^2,\\
  &&y_2'=a^{1,1}_2 y_1^2+(a^{1,2}_2+a^{2,1}_2)y_1y_2+a^{2,2}_2y_2^2.
\end{eqnarray*}
Therefore
\begin{displaymath}
  A({\bf y})=\left[
  \begin{array}{cc}
    a^{1,1}_1 y_1+\beta_{1,1}y_2 & \beta_{1,2}y_1+a^{2,2}_1y_2\\
    a^{1,1}_2y_1+\beta_{2,1}y_2 & \beta_{2,2}y_1+a^{2,2}_2y_2
  \end{array}
\right]\!,
\end{displaymath}
where
\begin{equation}
  \label{eq:ExtraEquality}
  \beta_{1,1}+\beta_{1,2}=a^{1,2}_1+a^{2,1}_1,\qquad  \beta_{2,1}+\beta_{2,2}=a^{1,2}_2+a^{2,1}_2.
\end{equation}
We deduce that in this case the graph-Laplacian conditions (which must hold for all ${\bf y}\succeq{\bf 0}$) are
\begin{eqnarray*}
  &&a^{1,1}_1,a^{2,2}_2,\beta_{1,1},\beta_{2,2}\leq0,\\
  &&a^{1,1}_1+a^{1,1}_2=a^{2,2}_1+a^{2,2}_2=\beta_{1,1}+\beta_{2,1}=\beta_{1,2}+\beta_{2,2}=0.
\end{eqnarray*}
Six equalities (inclusive of \eqref{eq:ExtraEquality}) and four inequalities for eight variables: impossible in some configurations, while other configurations lead to an infinity of solutions. 

Henceforth we let ${\bf e}_i$ stand for the $i$th unit vector. 

\begin{theorem}
  \label{thm:GraphLaplacian}
  The ODE system 
  \begin{equation}
    \label{eq:PolyODE1}
    y_k'=\sum_{\stackrel{\scriptstyle \ell_1+\cdots+\ell_d=2}{\ell_1,\ldots,\ell_d\geq0}} a^{\bf \ell}_k y_1^{\ell_1}y_2^{\ell_2}\cdots y_d^{\ell_d},\qquad k=1,\ldots,d
  \end{equation}
  admits the graph Laplacian representation \eqref{eq:ODE2} subject to the assumptions
  \begin{eqnarray}
    \label{eq:Ass1}
    &&a_k^{2{\bf e}_k}\leq0,\qquad a_k^{2{\bf e}_i}\geq0,\quad k,i=1,\ldots,d,\quad i\neq k,\\
    \label{eq:Ass2}
    &&a_k^{{\bf e}_i+{\bf e}_k}\leq0,\quad a_k^{{\bf e}_i+{\bf e}_j}\geq0,\qquad k,i,j=1,\ldots,d,\quad i\neq j,\quad k\neq i,j,\hspace*{20pt}\\
    \label{eq:Ass3}
    &&\sum_{k=1}^d a_k^{{\bf e}_k+{\bf e}_i}=0,\qquad i=1,\ldots,d.
  \end{eqnarray}
\end{theorem}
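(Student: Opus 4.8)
The plan is to construct the graph Laplacian $A(\yy)$ explicitly from the coefficients $a^{\bf\ell}_k$ and then verify that Properties 1 and 2 of the definition hold for every $\yy\succeq{\bf 0}$ with ${\bf 1}^\top\yy=1$. The natural choice, suggested by the $d=2$ computation preceding the theorem, is to distribute each quadratic monomial $y_\ell y_i$ among the entries of $A$ so that a pure square $y_i^2$ contributes only to column $i$ and a mixed term $y_iy_j$ ($i\neq j$) is split between columns $i$ and $j$. Concretely, I would set, for the diagonal,
\begin{displaymath}
  A_{k,k}(\yy)=a_k^{2{\bf e}_k}y_k+\sum_{i\neq k} a_k^{{\bf e}_i+{\bf e}_k}y_i,
\end{displaymath}
and for the off-diagonal $k\neq\ell$,
\begin{displaymath}
  A_{k,\ell}(\yy)=a_k^{2{\bf e}_\ell}y_\ell+\sum_{j\neq\ell} a_k^{{\bf e}_j+{\bf e}_\ell}y_j
\end{displaymath}
(where the coefficient $a_k^{{\bf e}_i+{\bf e}_j}$ of the symmetric term $y_iy_j$ is regarded as a single datum). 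One checks that $A(\yy)\yy$ reproduces the right-hand side of \eqref{eq:PolyODE1}: the $(k,\ell)$ entry multiplied by $y_\ell$ and summed over $\ell$ collects exactly the monomials $a_k^{\bf\ell}\yy^{\bf\ell}$ with $|{\bf\ell}|=2$.

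The verification then splits into three routine checks. Property 1 on the off-diagonal: for $k\neq\ell$, every summand of $A_{k,\ell}(\yy)$ is a product of a nonnegative coefficient (by \eqref{eq:Ass1} for the $y_\ell$ term, by \eqref{eq:Ass2} for the $y_j$ terms with $j\neq\ell$, $j\neq k$; the case $j=k$ uses that $a_k^{{\bf e}_k+{\bf e}_\ell}$ appears with the right sign condition in \eqref{eq:Ass2}) with a nonnegative component $y_\ell$ or $y_j$, hence $A_{k,\ell}(\yy)\geq0$. Property 1 on the diagonal: each summand of $A_{k,k}(\yy)$ is a nonpositive coefficient (by \eqref{eq:Ass1} and \eqref{eq:Ass2}) times a nonnegative $y_i$, hence $A_{k,k}(\yy)\leq0$. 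Property 2 (zero column sum): summing the column-$\ell$ entries over $k$, the coefficient of $y_\ell$ is $\sum_k a_k^{2{\bf e}_\ell}$ and the coefficient of each $y_j$ ($j\neq\ell$) is $\sum_k a_k^{{\bf e}_j+{\bf e}_\ell}$; the latter is zero by \eqref{eq:Ass3}, and the former is also covered by \eqref{eq:Ass3} with the convention that $2{\bf e}_\ell={\bf e}_\ell+{\bf e}_\ell$, so $\sum_k A_{k,\ell}(\yy)=0$ for all $\yy$.

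The main obstacle is bookkeeping rather than mathematics: one must be careful about the convention for the coefficient of a mixed monomial $y_iy_j$ (whether $a_k^{{\bf e}_i+{\bf e}_j}$ denotes the single coefficient of $y_iy_j$ or one must write $a_k^{{\bf e}_i+{\bf e}_j}+a_k^{{\bf e}_j+{\bf e}_i}$ as in the $d=2$ display), and about the boundary case $j=k$ inside the off-diagonal sum for $A_{k,\ell}$, which must be assigned to the diagonal of column $k$ consistently. I would fix the convention once at the start so that \eqref{eq:Ass1}--\eqref{eq:Ass3} map cleanly onto the three checks above. With the construction written down, none of the three verifications requires more than reading off signs, so the proof is short; the only genuine content is exhibiting the decomposition, and the theorem's hypotheses are precisely what make that particular decomposition work — the freedom noted before the theorem (the $\beta$'s) has been eliminated by the canonical choice of splitting.
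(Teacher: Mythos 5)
Your strategy is the paper's strategy: exhibit $A(\yy)$ explicitly by allocating each quadratic monomial to a column, then read off Property 1 from \eqref{eq:Ass1}--\eqref{eq:Ass2} and Property 2 from \eqref{eq:Ass3}. The only difference is the allocation itself: the paper uses the one-sided cyclic choice $A_{k,\ell}(\yy)=a_k^{2{\bf e}_\ell}y_\ell+a_k^{{\bf e}_\ell+{\bf e}_{\ell+1}}y_{\ell+1}$ (indices mod $d$), which assigns each mixed monomial entirely to one column, whereas you spread $y_iy_j$ over both columns $i$ and $j$; both choices are legitimate instances of the non-uniqueness discussed before the theorem, and your symmetric one in fact covers non-adjacent pairs $y_iy_j$ that the cyclic formula does not reach when $d\geq4$. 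Two defects in your displayed formulas do need the repairs you yourself flag: (i) as written, the off-diagonal sum $\sum_{j\neq\ell}a_k^{{\bf e}_j+{\bf e}_\ell}y_j$ includes $j=k$, and $a_k^{{\bf e}_k+{\bf e}_\ell}\leq0$ by \eqref{eq:Ass2}, so that term must indeed be moved to $A_{k,k}$ (where your diagonal formula already lists it) and excluded from $A_{k,\ell}$, otherwise Property 1 fails; (ii) under the symmetric single-datum convention, each mixed monomial with both indices different from $k$ is collected twice in $(A(\yy)\yy)_k$, so a factor $\tfrac12$ is needed on those off-diagonal terms (harmless for the sign and column-sum checks, since it is a positive constant). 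With those two conventions fixed at the outset, as you propose, the three verifications go through exactly as in the paper.
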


\begin{proof}
  We prove the theorem by constructing explicitly a graph Laplacian $A({\bf y})$, letting 
  \begin{equation}
    \label{eq:Constructive}
    A_{k,\ell}({\bf y})=a_k^{2{\bf e}_\ell} y_\ell+a_k^{{\bf e}_\ell+{\bf e}_{\ell+1}} y_{\ell+1},\qquad k,\ell=1,\ldots,d \quad \pmod d.
  \end{equation}
  All that remains is to prove that $A({\bf y})$, as defined in \eqref{eq:Constructive}, is indeed a graph Laplacian. Thus, recalling that $y_1,\ldots,y_d\geq0$ and  that $k$ is computed {\em modulo\/} $d$, 
  \begin{displaymath}
    A_{k,k}({\bf y})=a_k^{2{\bf e}_k} y_k+a_k^{{\bf e}_k+{\bf e}_{k+1}}y_{k+1}\leq0
  \end{displaymath}
  because of \eqref{eq:Ass1} and \eqref{eq:Ass2}. These two conditions also imply that 
  \begin{displaymath}
    A_{k,\ell}({\bf y})=a_k^{2{\bf e}_\ell}y_\ell+a_k^{{\bf e}_\ell+{\bf e}_{\ell+1}}y_{\ell+1}\geq0,\qquad k\neq \ell.
  \end{displaymath}
  Finally, it follows from \eqref{eq:Ass3} that
  \begin{displaymath}
    \sum_{k=1}^d A_{k,\ell}({\bf y})=\left(\sum_{k=1}^d a_k^{2{\bf e}_\ell}\right) y_\ell +\left(\sum_{k=1}^d a_k^{{\bf e}_\ell+{\bf e}_{\ell+1}}\right)y_{\ell+1}=0
  \end{displaymath}
  and we are done. 
\end{proof}

As an example, we revisit \eqref{eq:Robertson}, focussing on the quadratic part. Now
\begin{displaymath}
  a_1^{{\bf e}_2+{\bf e}_3}=10^4,\quad a_2^{{\bf e}_2+{\bf e}_3}=-10^4,\quad a_2^{2{\bf e}_2}=-3\cdot10^7,\quad a_3^{2{\bf e}_2}=3\cdot10^7
\end{displaymath}
and the remaining coefficients are zero: it is easy to verify that the conditions of Theorem~\ref{thm:GraphLaplacian} are satisfied. The representation \eqref{eq:Constructive}, incidentally, corresponds to \eqref{eq:RobertsonLin}, the graph-Laplacian form of of the Robertson reaction.

In this paper we focus only on equations \eqref{eq:PolyODE1}. The situation is more subtle for higher-order equations. For example, consider the case $d=2$, $m=3$ and
\begin{eqnarray*}
  &&y_1'=-\alpha_2^{3,0}y_1^3+\alpha_1^{2,1}y_1^2y_2+\alpha_1^{1,2}y_1y_2^2+\alpha_1^{0,3}y_2^3,\\
  &&y_2'=\alpha_2^{3,0}y_1^3-\alpha_1^{2,1}y_1^2y_2-\alpha_1^{1,2}y_1y_2^2-\alpha_1^{0,3}y_2^3.
\end{eqnarray*}
The most general way of writing it in the form \eqref{eq:ODE2} is with the matrix
\begin{displaymath}
  A({\bf y})=\left[\!\!
  \begin{array}{cc}
    -\alpha_2^{3,0}y_1^2\!-\!\beta_{2,1}^{2,1}y_1y_2\!-\!\beta_{2,1}^{1,2}y_2^2 & (\alpha_1^{2,1}\!+\!\beta_{2,1}^{2,1})y_1^2\!+\!(\alpha_1^{1,2}\!+\!\beta_{2,1}^{1,2})y_1y_2\!+\!\alpha_1^{0,3}y_2^2\\[4pt]
    \alpha_2^{3,0}y_1^2\!+\!\beta_{2,1}^{2,1}y_1y_2\!+\!\beta_{2,1}^{1,2}y_2^2 & -(\alpha_1^{2,1}\!+\!\beta_{2,1}^{2,1})y_1^2\!-\!(\alpha_1^{1,2}\!+\!\beta_{2,1}^{1.2})y_1y_2\!-\!\alpha_1^{0,3}y_2^3
  \end{array}
  \!\!\right]\!,
\end{displaymath}
where $\beta_{2,1}^{2,1}$ and $\beta_{2,1}^{1,2}$ are constants. Clearly, to have a graph Laplacian for all ${\bf y}\succeq{\bf 0}$ we require $\alpha_1^{0,3},\alpha_2^{3,0}\geq0$ and the two parameters need to satisfy 
\begin{displaymath}
  \beta_{2,1}^{2,1}\geq\max\{0,-\alpha_1^{2,1}\},\qquad \beta_{2,1}^{1,2}\geq \max\{0,-\alpha_1^{1,2}\}.
\end{displaymath}
Note that it is possible for $\beta_{2,1}^{2,1}<0$, say, and yet $A_{2,1}\geq0$, provided that $\beta_{2,1}^{1,2}\geq0$ and $\beta_{2,1}^{2,1}\geq-2\sqrt{\alpha_2^{3,0}\beta_{2,1}^{1,2}}$. As an example, we can write
\begin{displaymath}
  y_1'=-y_1^3+y_1^2y_2+y_2^3,\qquad y_2'=y_1^3-y_1^2y_2-y_2^3
\end{displaymath}
in the form \eqref{eq:ODE2} with
\begin{displaymath}
  A({\bf y})=\left[
  \begin{array}{cc}
    -y_1^2 & y_1^2+y_2^2\\[2pt]
    y_1^2 & -(y_1^2+y_2^2)
  \end{array}
\right]
\end{displaymath}
but it can also be written as
\begin{displaymath}
  A({\bf y})=\left[
  \begin{array}{cc}
    -y_1^2+\frac12 y_1y_2-y_2^2 & \frac12 y_1^2+y_1y_2+y_2^2\\[2pt]
    y_1^2-\frac12 y_1y_2+y_2^2 & -\frac12 y_1^2-y_1y_2-y_2^2
  \end{array}
\right]=\left[
  \begin{array}{cc}
    -(y_1-y_2)^2 & \frac12 (y_1^2+y_2)^2\\[2pt]
    (y_1-y_2)^2 & -\frac12 (y_1^2+y_2)^2
  \end{array}
\right].
\end{displaymath}
Note that this cannot occur for quadratic equations \eqref{eq:PolyODE1} because, once $A_{k,\ell}({\bf y})$ is a multilinear function of ${\bf y}\succeq{\bf 0}$, it is a graph Laplacian only if all off-diagonal coefficients are nonnegative.

\subsection{Chemical reactions by the Law of Mass Action}
An important application are chemical reactions, where the rate of reaction is modelled by the Law of Mass Action.
Then the model is a first-order ODE with a multivariate polynomial for the right hand side, so it can be considered an important special case of our framework.
Suppose there are $N$ reactions, where the $j$-th reaction is written in the form
\begin{displaymath}
  r_{j,1}G_1+ r_{j,2}G_2+ \ldots +r_{j,M}G_M \;\; 
	\xrightarrow[]{k_j}
	 \;\; q_{j,1}G_1+q_{j,2}G_2+ \ldots +q_{j,M}G_M ,
\end{displaymath}
$j=1\ldots,N$. 
Here $r_{i,j}, q_{i,j}$  are  integer coefficients, $G_i, \ i = 1 ,2, \ldots ,M$ are symbols for the chemical species, $y_i$ denotes the concentration of species $i$, and $k_j$ is the rate constant. 
The model is the ODE
\begin{equation}\label{eq:ChemiReac1}
  \yy' = S {\bf p}, \qquad \yy(0)=\yy_0
\end{equation}
where $\yy \in \BB{R}^M,  S\in \BB{R}^{M\times N}, \ {\bf p}\in \BB{R}^N$, and $  S_{i,j}= q_{i,j}-r_{i,j}$ is the matrix of {\em stoichiometric vectors\/}, while
\begin{displaymath} 
	p_j=k_j\prod_{i=1}^M y_i^{r_{i,j}}
\end{displaymath}
is the Law of Mass Action to model the rates of reaction.
This is a nonlinear and autonomous differential equation (it would be non-autonomous if the rates $k_j=k_j(t)$ were time-varying, for example to model fluctuating temperatures).
The following theorem shows this model can always be written in the form 
\begin{equation}
\label{eq:Linearised1}
\yy' = \mathcal{L}(\yy) \yy,
\end{equation}
 where the matrix $\mathcal{L}(\yy)$ has the same pattern of signs as a Laplacian, i.e. off-diagonal entries are nonnegative, and negative entries can only appear on the diagonal.

\begin{theorem}
\label{thm:LawMassAction:Laplacian}
 The nonlinear ODE \eqref{eq:ChemiReac1} can be written in the quasi-linearised form \eqref{eq:Linearised1} where the negative elements of the matrix $\mathcal{L}(\yy)$ only appear on the diagonal.
\end{theorem}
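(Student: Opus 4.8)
The plan is to mimic the additive, production--destruction viewpoint already used for \eqref{eq:ProdDestruc}. Writing $S_{i,j}=q_{i,j}-r_{i,j}$, I would first split the right-hand side of \eqref{eq:ChemiReac1} into its production and destruction parts,
\[
 y_i'=\sum_{j=1}^N q_{i,j}p_j(\yy)-\sum_{j=1}^N r_{i,j}p_j(\yy)=:P_i(\yy)-D_i(\yy),
\]
noting that, since all $r_{i,j},q_{i,j}$ are nonnegative integers and all $k_j>0$, both $P_i$ and $D_i$ are polynomials with nonnegative coefficients, hence nonnegative on the cone $\yy\succeq{\bf 0}$. It then suffices to absorb $D_i$ into the diagonal of $\mathcal{L}(\yy)$ with a nonpositive sign and to absorb $P_i$ into $\mathcal{L}(\yy)$ using only nonnegative entries, because the diagonal entries of a matrix with the Laplacian sign pattern are otherwise unconstrained.

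For the destruction term: if reaction $j$ contributes to $D_i$ then $r_{i,j}\ge1$, so the monomial $p_j(\yy)=k_j\prod_{\ell}y_\ell^{r_{\ell,j}}$ is divisible by $y_i$; factoring $y_i$ out of every such term would give $D_i(\yy)=y_i\,d_i(\yy)$ with $d_i(\yy)=\sum_{j}r_{i,j}k_j\prod_\ell y_\ell^{r_{\ell,j}-\delta_{\ell,i}}\succeq0$ on the cone (the remaining exponents are still nonnegative). Hence $D_i$ contributes $-d_i(\yy)\le0$ to the diagonal entry $\mathcal{L}_{i,i}(\yy)$, which is admissible.

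For the production term: for each reaction $j$ that has at least one reactant I would fix any index $m_j$ with $r_{m_j,j}\ge1$ and write $p_j(\yy)=y_{m_j}\,k_j\prod_\ell y_\ell^{r_{\ell,j}-\delta_{\ell,m_j}}$, so that the contribution $q_{i,j}p_j(\yy)$ to $P_i$ becomes $y_{m_j}$ times the nonnegative quantity $q_{i,j}k_j\prod_\ell y_\ell^{r_{\ell,j}-\delta_{\ell,m_j}}$, i.e.\ a nonnegative entry in row $i$, column $m_j$. Summing these contributions over all reactions, together with the diagonal pieces above, defines $\mathcal{L}(\yy)$; by construction every off-diagonal entry is a sum of nonnegative terms while diagonal entries carry both signs, so $\mathcal{L}(\yy)$ has the claimed sign pattern and, by the very way it was assembled, $\mathcal{L}(\yy)\yy$ reproduces $P_i-D_i=y_i'$. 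The freedom in choosing each $m_j$ would also explain why, as remarked in the previous subsection, such a representation is far from unique.

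The main obstacle I anticipate is the degenerate case of a zeroth-order reaction, $\mathbf r_j=\mathbf 0$: then $p_j\equiv k_j$ is a positive constant, there is no reactant variable to factor out, and $y_i'$ picks up a constant source term. If one insists that $\mathcal{L}(\yy)$ be polynomial (or even merely continuous on the closed cone), such reactions genuinely cannot be absorbed and must be excluded, i.e.\ one assumes that every reaction consumes at least one molecule --- which is the case in all the examples treated here. If only the sign pattern is required, which is all that Proposition~\ref{prop:GraphLaplacians} needs for positivity, a constant $c>0$ can still be spread out as $\sum_\ell\bigl(c/(M y_\ell)\bigr)y_\ell$, at the price that $\mathcal{L}(\yy)$ blows up near the boundary of the cone; this is the only point where genuine care is needed, and everything else reduces to the routine bookkeeping check indicated above.
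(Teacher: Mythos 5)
Your proposal is correct and follows essentially the same route as the paper's proof: negative contributions to $y_i'$ arise only from reactions with $r_{i,j}\geq1$, so $p_j$ is divisible by $y_i$ and the term can be allocated to the diagonal, while production terms are placed off-diagonal by factoring out a reactant variable. Your explicit treatment of the degenerate zeroth-order reaction ($\mathbf r_j=\mathbf 0$), which the paper's proof silently passes over, is a welcome refinement rather than a divergence in method.
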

\begin{proof}
  We assume that $q_{i,j},r_{i,j}$ are non-negative integers, $k_j$ is a non-negative real number and $y_j\geq 0$. Then, a negative coefficient could only appear in the stoichiometric matrix $S_{i,j}$ if $r_{i,j}\geq 1$, and this happens in the equation for $y_i'$. 
  Since we have 
	$
	  p_j=k_j\!\left(\prod_{k=1, k\neq i}^M y_k^{r_{k,j}}\right)\!  y_i^{r_{i,j}}
	$
with  $r_{i,j}\geq 1$, we may allocate this term to the diagonal of the matrix $\mathcal{L}(\yy)$.
All other components where  $r_{i,j}=0$ in the right hand side of the equation for $y_i'$ have positive coefficients and can be allocated outside the diagonal.
\end{proof}

\paragraph{Remark.}
Note that the matrix $\mathcal{L}(\yy)$ of \eqref{eq:Linearised1} need not be unique, as we previously showed by the example of the Robertsons reaction in \eqref{eq:RobertsonLin}.
The theorem shows that we may form $\mathcal{L}(\yy)$ so that it has the right pattern of signs to be a graph Laplacian.
Similarly to the remarks following Proposition~\ref{prop:GraphLaplacians},
this ensures positivity of the solutions, and the point we are making here  is that the new numerical methods proposed in this paper can be applied, via  \eqref{eq:Linearised1}, to this big class of important applications.
The only difference between \eqref{eq:Linearised1} and the primary focus of this paper in \eqref{eq:ODE2}, is that in \eqref{eq:ODE2} we additionally assume that ${\bf 1}^\top$ is in the left null space of $A(\yy)$, but that does not prevent us from applying the numerical schemes proposed in this paper, and they will preserve positivity as required.
(Although there may be issues with other conservation laws, as we show in the autonomous oscillations example \eqref{eq:Oscillation:model}, and our atmospheric chemistry example \eqref{eq:stratospheric}.)

\setcounter{equation}{0}
\setcounter{figure}{0}
\section{Numerical experiments}

In this section we present some numerical experiments to illustrate the performance of the new methods on a number of examples from the literature. We denote:

\begin{itemize}
	\item ES2: The symmetric second order 3-exponential splitting method \eqref{eq:smoothing} or \eqref{eq:smoothing:nonaut};
	\item EM1: The first order 1-exponential Magnus integrator \eqref{eq:ExpoRep} or \eqref{eq:1.3t};
	\item EM2: The second order 2-exponential Magnus integrator \eqref{eq:ExpoRepb} or \eqref{eq:1.3bt};
	\item EM3: The third order 7-exponential Magnus integrator \eqref{eq:CF3} or \eqref{eq:CF3t}.
\end{itemize}
We will also consider, for comparison, the following more conventional numerical solvers:
\begin{itemize}
	\item Euler: The first-order explicit Euler method;
	\item RK4: The 4-stage fourth-order explicit RK method (as a reference method to compare);
	\item ROS4: The 4-stage fourth-order Rosenbrock method with coefficients used by default in \cite{hairer10sod}.
	\item MP2: The second order Modified Patankar method.
\end{itemize}

\subsection{Example 1: The SIDARTHE mathematical model}

We first consider a generalised SIR model (SIDARTHE) that has been used to model the evolution of the Cov-SARS-2 epidemic in Italy  \cite{giordano20mtc}. That model can also be used for any other country with appropriate data or it can be even extended e.g.\ to age-dependent variables.

 The SIDARTHE dynamical system \cite{giordano20mtc} consists of eight ordinary differential equations, describing the evolution of the population in each stage over time. The equations can be written in the form
\begin{displaymath}
  {\bf y}' = A({\bf b}(t),{\bf y}){\bf y}, \qquad {\bf y}(0)={\bf y}_0\in\BB{R}^8,
\end{displaymath}
where ${\bf b}:\BB{R}\rightarrow \BB{R}^{15}$ is a vector function depending on 15 time-dependent parameters. The vector ${\bf b}$ was taken as a piecewise constant function, and the authors estimate the model parameters based on data from 20 February 2020 (day 1) to 5 April 2020 (day 46) and show the impact of progressive restrictions on the spread of the epidemic. For example, ${\bf b}$ is constant from day 1 to 4 (with a value of $R_0=2.28$), and changes to new constant values for the period 4 to 12 (with a value of $R_0=1.66$), and so on.

Notice that since the vector field is not a smooth function (it is piecewise constant) the numerical methods deteriorate down to order one. However, a more realistic model should consider  ${\bf b}(t)$  as a smooth time-dependent function, and in this case the order of the methods is recovered.

For simplicity, we take the same initial values for ${\bf b}$ and the same initial conditions ${\bf y}_0$ as in \cite{giordano20mtc}, but we take  ${\bf b}$ constant for a longer period, from day 1 to 20.

We observed that the model is very sensitive to the parameter associated to the first component of ${\bf b}$, $b_1=\alpha$. That parameter was taken initially as $\alpha=0.57$, and we have analysed the solution for the first component of ${\bf y}$ (i.e. ${y}_1(t)=S(t)$, the susceptible (uninfected) population at day 20) for different values of  $\alpha=0.57\cdot r$ with $r\in[1,2]$. The results are shown in Figure~\ref{fig:SIDARTHE0}.

\begin{figure}[hbt]%
\begin{center}%
    \hspace*{-0.13cm}
    \includegraphics[width=8cm]{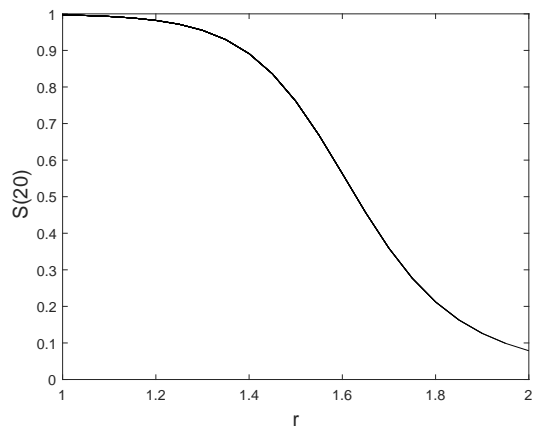} 
   \caption{  Solution for the susceptible population at $t=20$ for different values of the parameter $r$ where $\alpha=0.57\cdot r$.} 
 \end{center}\label{fig:SIDARTHE0}
\end{figure}

Next, we take $r=1.5$, corresponding to a moderately stiff problem ($S(20)$ still has not dramatically decreased)  and we compute the 2-norm error of the solution ${\bf y}(20)$ versus the time step for the new methods as well as for the explicit Euler method that was used in \cite{giordano20mtc}. 
The results are displayed in Figure~\ref{fig:SIDARTHE} (left) 
where the order of the methods is clearly visible from the slopes of the curves.

The new methods require to compute matrix exponentials and this can be computationally costly in some cases. It is thus interesting to study if it is possible to replace the exact exponential of matrices by cheaper approximations while still preserving positivity.

This is not a very stiff problem and we have repeated the same numerical experiments while replacing each exponential by the second-order diagonal Pad\'e approximation. In order to preserve positivity, we proceed as follows, given $A=\tilde A+a^*I$ where $\tilde A \succeq O$, we consider the following approximation to the exponential
\begin{displaymath}
    \e^{tA} = \e^{ta^*} \e^{t\tilde A} \simeq \e^{ta^*}  \,\frac{1+\frac12 t\tilde A}{1-\frac12 t\tilde A} .
\end{displaymath} 
Note that, since ${\bf 1}^\top\tilde A = -a^*$, we have 
\begin{displaymath}
{\bf 1}^\top \e^{ta^*}  \ 
		\frac{1+\frac12 t\tilde A}{1-\frac12 t\tilde A} =
		\e^{ta^*} \frac{1-\frac12 ta^*}{1+\frac12 ta^*}  \neq 1
\end{displaymath}
and mass is not preserved. This can be fixed, for example, if we also approximate the scalar function $\e^{ta^*}$ by the second-order diagonal Pad\'e approximation, so
\begin{displaymath}
{\bf 1}^\top \frac{1+\frac12 ta^*}{1-\frac12 ta^*}  \ 
		\frac{1+\frac12 t\tilde A}{1-\frac12 t\tilde A} =1
\end{displaymath}
and this approach preserves norm and positivity in the stability region.

The results are shown in Figure~\ref{fig:SIDARTHE} (right). We observe that the schemes maintain their accuracy while being considerably cheaper. The third-order method EM3 exhibits  second order accuracy (due to the second order Pad\'e approximation) but this occurs only at higher accuracies.

{{}{For clarity in the presentation, the results for MP2 are not shown but, as expected they are slightly worse but close to the results given by EM2.}

Unfortunately, this is not the case if we repeat the numerical experiment with the very stiff problem of  Robertson's reaction. Once higher-order approximations to the exponential are used, positivity is not guaranteed. Not all higher-order Pad\'e approximations preserve positivity, unlike the second order one, and this deserves further investigation.

\begin{figure}[ht]%
\begin{center}%
    \hspace*{-0.33cm}
    \includegraphics[width=6.4cm]{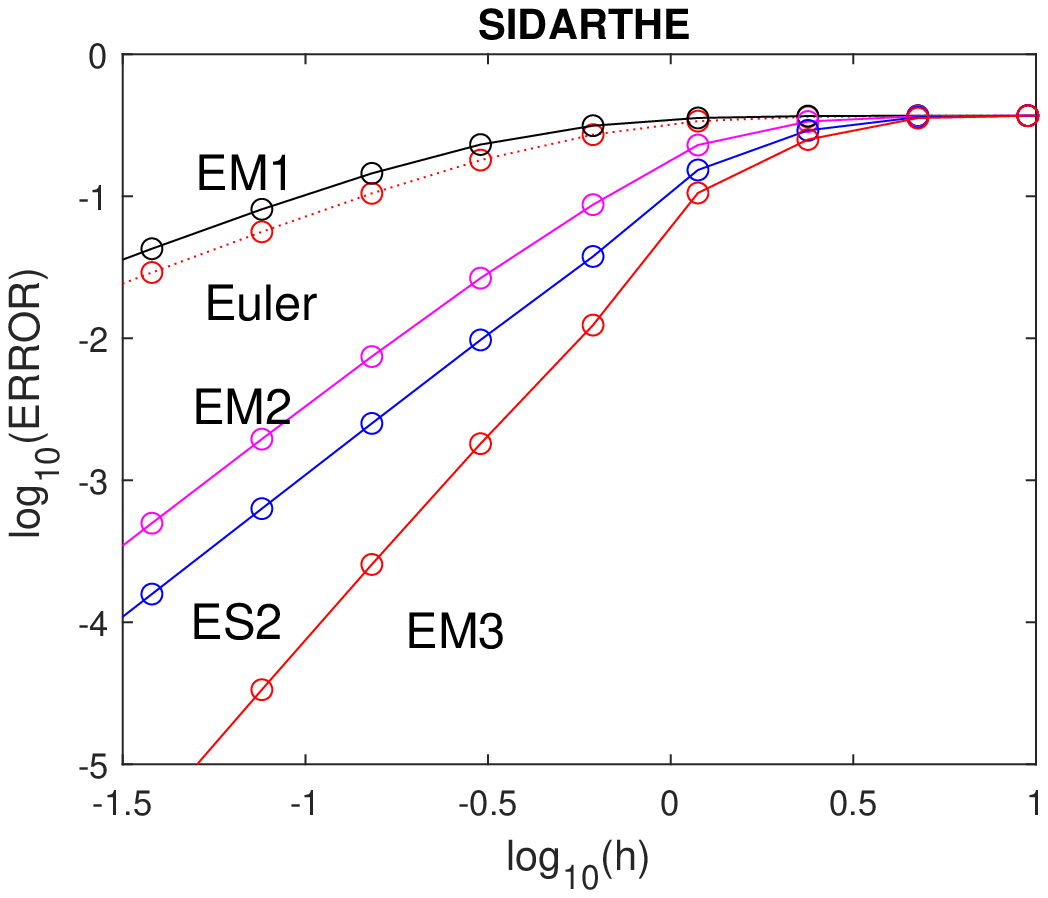} 
    \includegraphics[width=6.4cm]{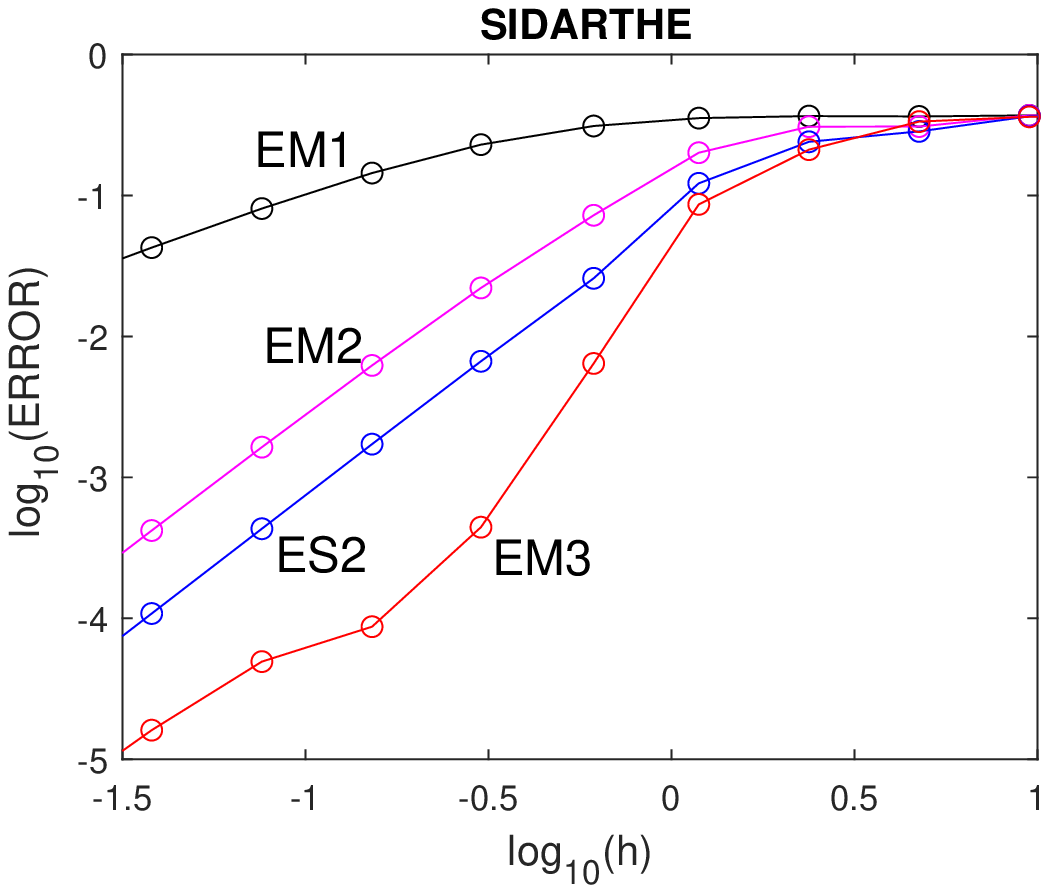} \\ 
    \caption{The 2-norm error of the solution of the SIDARTHE mathematical model at the final time versus the time step in double logarithmic scale: (left) the new methods compute the exponential of matrices to round-off accuracy (this, of course, is irrelevant to the explicit Euler method); and (right) the same new methods with each exponential  replaced by the second order Pad\'e approximation.}
		\label{fig:SIDARTHE}
\end{center}
\end{figure}

%

\subsection{Example 2: Robertson's reaction.} Let us now consider the Robertson's reaction written in the form \eqref{eq:RobertsonLin}
with initial conditions ${\bf y}_0=[1,0,0]^\top$ and time interval $t\in[0,0.3]$ as in  \cite{hairer10sod} (p.~57). We numerically solve the problem repeatedly using different values for the time step and compute the 2-norm error of the solution at the final time. Here, we compare with the `exact' solution that is computed numerically with sufficiently high accuracy. 

Notice that this is a very stiff problem that turns into a non-stiff problem if one applies an appropriate time transformation which can be integrated with a constant time step (in the fictitious time) by methods for non-stiff problems. This is basically the case studied in \cite{burchard03hoc} with time step $h_n=1.8^n\times h_0$ and initial time step $h_0=10^{-6}$ that allows to integrate for the interval $t\in[0,10^{11}]$ with a very small number of time steps, but the details in the reaction at the very beginning can be lost.

Figure~\ref{fig:Robertson1a} (left) shows the error  versus the time step in double logarithmic scale. The implicit Rosenbrock method, ROS4, outperforms the explicit RK methods, Euler and RK4, but also turns unstable for moderate values of the time step (and does not preserve positivity) while the new exponential methods preserve positivity and are unconditionally stable (the third order method, EM3, preserves positivity for all time steps considered). Note the relatively high accuracy provided by the new schemes even when considering large time steps. The best method among the proposed schemes  depends on the desired accuracy where the computational cost has to be taken into account.

{{}{As in the previous example, the results for MP2, not shown, are slightly worse but close to the results given by EM2.}

We have repeated the same numerical experiments using only the new exponential methods, but applied to the equations as given in \eqref{eq:RobertsonLinNO}, i.e.\ the same problem but written in a different way such that the matrix is no longer graph Laplacian. Figure~\ref{fig:Robertson1a} (right) shows the results obtained. We filled a relevant circle when, during the numerical integration, a negative solution was obtained on any of the components. For small time steps the performance is quite similar (and the performance for EM1 is actually somewhat better) but the errors grow faster for large time steps (lower accuracies) and, even worse, negative solutions do occur.

\begin{figure}[htb]%
\begin{center}%
    \hspace*{-0.33cm}
    \includegraphics[width=6.4cm]{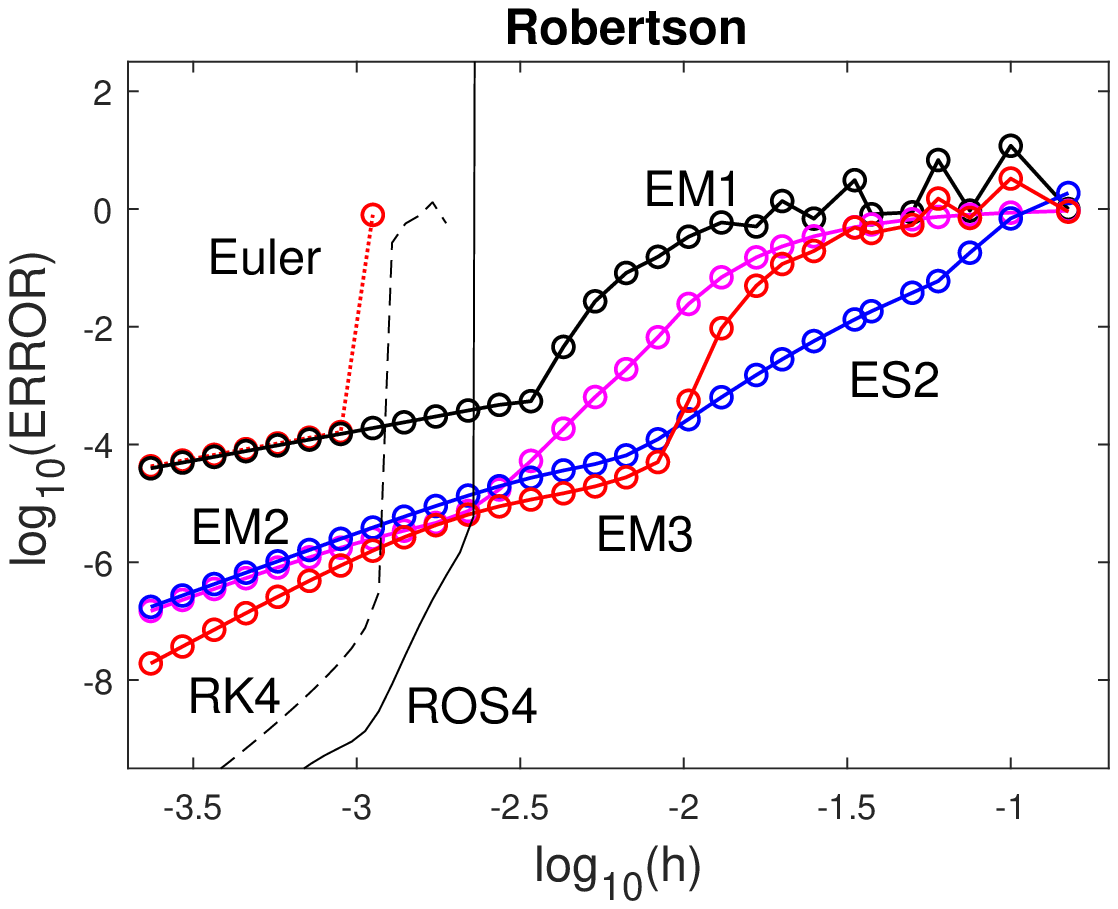} 
    \includegraphics[width=6.4cm]{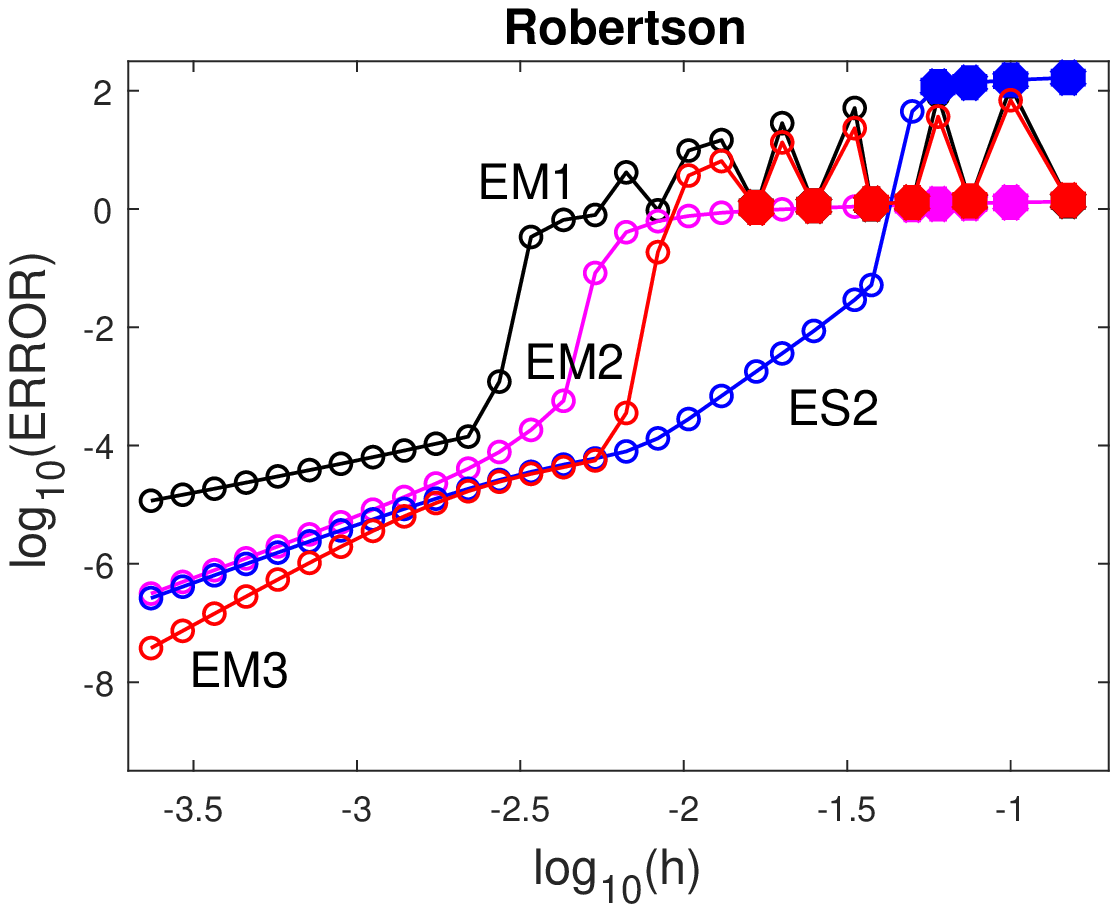} 
    \caption{The 2-norm error of the solution of the Robertson's reaction at the final time versus the time step in double logarithmic scale: (left) the new methods are used to solve \eqref{eq:RobertsonLin} where the matrix is graph Laplacian (the results of the standard methods Euler, RK4 and ROS4 are also included); and (right) the same new methods applied to solve the same problem, but written  in the form \eqref{eq:RobertsonLinNO}, where the matrix is no longer a graph Laplacian (a filled circle indicates that a negative solution on any of the components has been obtained in the course of the time integration).}
\end{center}\label{fig:Robertson1a} 
\end{figure}

\subsection{Example 3: The stratospheric reaction}

Let us consider the basic stratospheric reaction mechanism studied in \cite{sandu01pni} that involves six species
\[
   {\bf y}=[\,[O^{1D}],[O],[O_3],[O_2],[NO],[NO_2]\,]^\top=[y_1,\ldots,y_6]^\top
\]
and whose model to obtain the evolution of the concentrations is given by the system of ODEs
\begin{Eqnarray}
 \nonumber
 y_1'&=&k_5y_3-k_6y_1-k_7y_1y_3 \\
 \nonumber
 y_2'&=& 2k_1y_4- k_2y_2y_4+k_3y_3-k_4y_2y_3+k_6y_1-k_9y_2y_6+k_{10}y_6 \\
 \label{eq:stratospheric}
 y_3'&=&k_2y_2y_4-k_3y_3-k_4y_2y_3-k_5y_3-k_7y_1y_3-k_8y_3y_5 \\
 \nonumber
 y_4'&=& -k_1y_4-k_2y_2y_4+k_3y_3+2k_4y_2y_3+k_5y_3+2 k_7y_1y_3+ k_8y_3y_5+k_9y_2y_6  \\
 \nonumber
 y_5'&=& -k_8y_3y_5+k_9y_2y_6+ k_{10}y_6 \\
 \nonumber
 y_6'&=&  k_8y_3y_5 -k_9y_2y_6 -k_{10}y_6  
\end{Eqnarray}
with
\[
\begin{array} {lll}
 k_1=2.643\cdot 10^{-10}\sigma^3(t), &
 k_2=8.018\cdot 10^{-17},  &
 k_3=6.120\cdot 10^{-4}\sigma(t),
  \\[3pt]
 k_4=1.576\cdot 10^{-15}, & k_5=1.070\cdot 10^{-3}\sigma^2(t),&
  k_6=7.110\cdot 10^{-11}, \\[3pt]
k_7=1.200\cdot 10^{-10},  &
k_8=6.062\cdot 10^{-15}, & k_9=1.069\cdot 10^{-11},\\[3pt]
\!\!k_{10}=1.289\cdot 10^{-2}\sigma(t),& & 
\end{array} 
\]
where
\[
   \sigma(t)=\left\{ \begin{array}{ll}
	\frac12+\frac12\cos\!\left( 
	\pi \left|\frac{2T_L-T_R-T_S}{T_S-T_R} \right|\frac{2T_L-T_R-T_S}{T_S-T_R} \right)
	\qquad &  \mbox{if} \ T_R\leq T_L \leq T_S \\
	0 &  \mbox{otherwise} .
	\end{array}  \right.
\]
The time is measured in seconds and it is taken as
\[
 T_L=\left(\frac{t}{3600} \right)\bmod 24, \qquad T_R=4.5, \qquad T_S=19.5.
\]
The initial time is considered at noon, $t_0=12\times 3600$, and it is integrated for three full days, until $t_f=t_0+72\times 3600$ with initial conditions given by
\[
 {\bf y}_0=[9.906\cdot 10^{1}, 6.624\cdot 10^{8}, 5.326\cdot 10^{11}, 
1.697\cdot 10^{16}, 8.725\cdot 10^{8}, 2.240\cdot 10^{8}]^\top.
\]

This is a non-autonomous systems that can be written in the form
\[
  {\bf y}'=A(t,{\bf y}) {\bf y}
\]
with $A(t,{\bf y})$ an explicitly time-dependent graph Laplacian matrix.
We can write the vector field in terms of the production and destruction parts
\[
   A(t,{\bf y}) {\bf y}=P(t,{\bf y})-D(t,{\bf y}) {\bf y}
\]
where $P(t,{\bf y}),D(t,{\bf y}) {\bf y}$ are non-negative. While the diagonal matrix $D$ is unique in this case, we can write 
\[
  P(t,{\bf y})=A_P(t,{\bf y}) {\bf y}
\] 
in many different ways for the matrix $A_P$. 
We have considered the following choice (other choices of $A_P$ can be considered) for $A$,
{\small\begin{displaymath}
\!\left[\begin{array} {cccccc}
 \!\!-(k_6+k_7y_3)\!\!& 0 & k_5  & 0  & 0  & 0  \\[2pt]
  k_6 &\!\!- (k_2y_4+k_4y_3+k_9y_6) \!\!& k_3 & 2k_1 & 0 & k_{10} \\[2pt]
  0  & \frac13k_2y_4 &  
	-\gamma
	&  \frac23k_2y_2  &  0  &  0 \\[2pt]
  \frac12 k_7y_3  & k_4y_3 + \frac12k_9y_6 & 
	\gamma + \frac12k_7y_1
	& 
	\!\!-(k_1+k_2y_2)\!\!  &  0  & \frac12k_9y_2 \\[2pt]
  0  & 0 &  0 & 0 & -k_8y_3  &  \!\!k_{10}+k_9y_2\!\!\\[2pt]
  0  & 0 &  0 & 0 &  k_8y_3  &  \!\!-(k_{10}+k_9y_2)\!\!
\end{array} \right]\!
\end{displaymath}}
with $\gamma=k_3+k_5+k_4y_2+ k_7y_1+k_8y_5$.

This problem has two linear mass conservation laws, 
{{}{
the number of atoms of oxygen and nitrogen, respectively.
 }
Given 
\[
  {\bf w}_1=[1,1,3,2,1,2]^\top, \qquad
  {\bf w}_2=[0,0,0,0,1,1]^\top
\]
it is true that
\[
  {\bf w}_1^\top A(t,{\bf y}) {\bf y}=
  {\bf w}_2^\top A(t,{\bf y}) {\bf y}={\bf 0}.
\]

Unfortunately, it is impossible to find a matrix $A_P$ such that 
\[
  {\bf w}_1^\top A(t,{\bf y})={\bf w}_2^\top A(t,{\bf y})={\bf 0},
\]
and both mass conservations cannot be simultaneously preserved by our schemes. We have to decide how to choose $A_P$ to optimise the performance of our methods: this is typical to geometric numerical integration of differential equations with multiple invariants. 

For this particular choice we have 
\[
 {\bf w}_2^\top A(t,{\bf y})={\bf 0}, \qquad \mbox{but} \qquad 
 {\bf w}_1^\top A(t,{\bf y})\neq {\bf 0},
\]
and then, in general,  ${\bf w}_1^\top {\bf y}(t)\neq \mbox{const}$. However, a good choice for $A_P$ can provide solutions where this quantity is preserved to very high accuracy.

We have observed that $y_1,y_2$ and $y_5$ take very small, but positive, values (say $10^{-200}$ or smaller) along the integration (standard methods usually provide negative values). In that case, measuring  relative error is not appropriate for these components.

Figure~\ref{fig:Stratospheric1} shows the evolution of the concentration of the different species in a logarithmic scale. Negative values in this plot correspond to having  no particles.

\begin{figure}[hbt]%
\begin{center}%
    \hspace*{-0.13cm}
    \includegraphics[width=10cm]{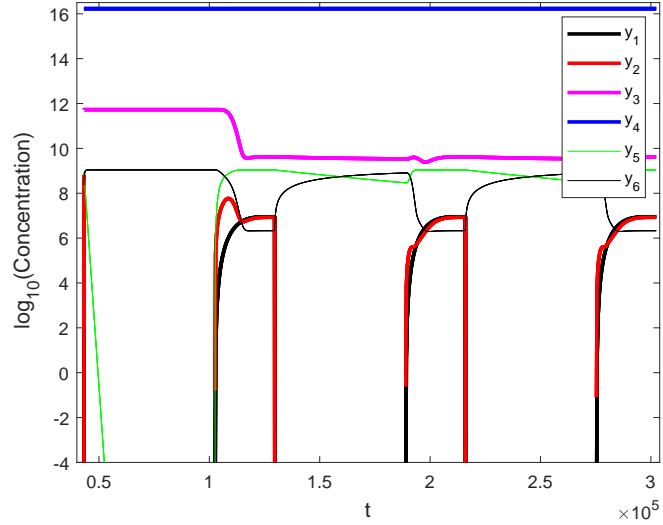} 
   \caption{  Solution for the concentrations for the stratospheric reaction in a logarithmic scale.
            }
\label{fig:Stratospheric1}
\end{center}
\end{figure}

\begin{figure}[ht]%
\begin{center}%
    \hspace*{-0.33cm}
    \includegraphics[width=6.4cm]{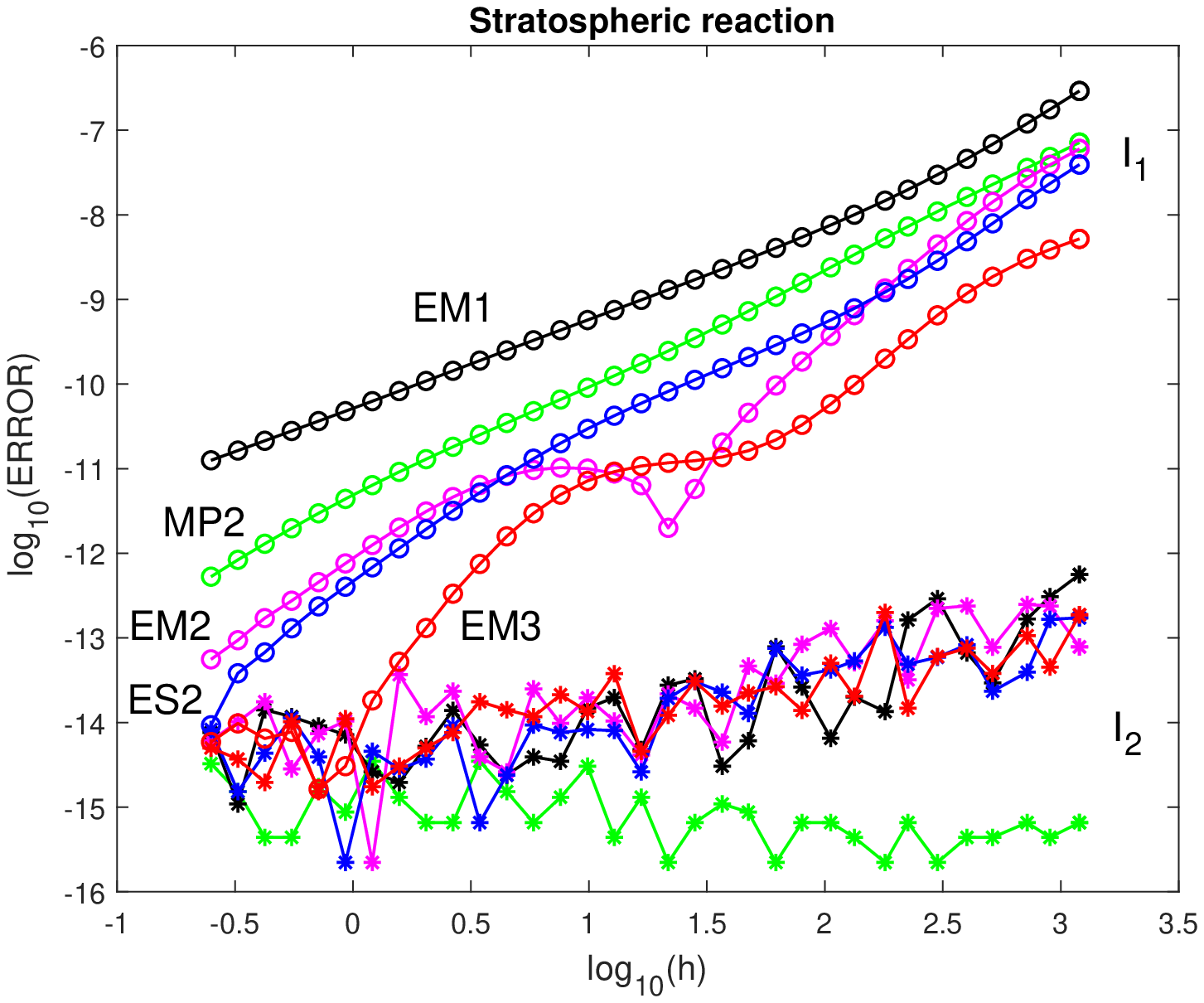} 
    \includegraphics[width=6.4cm]{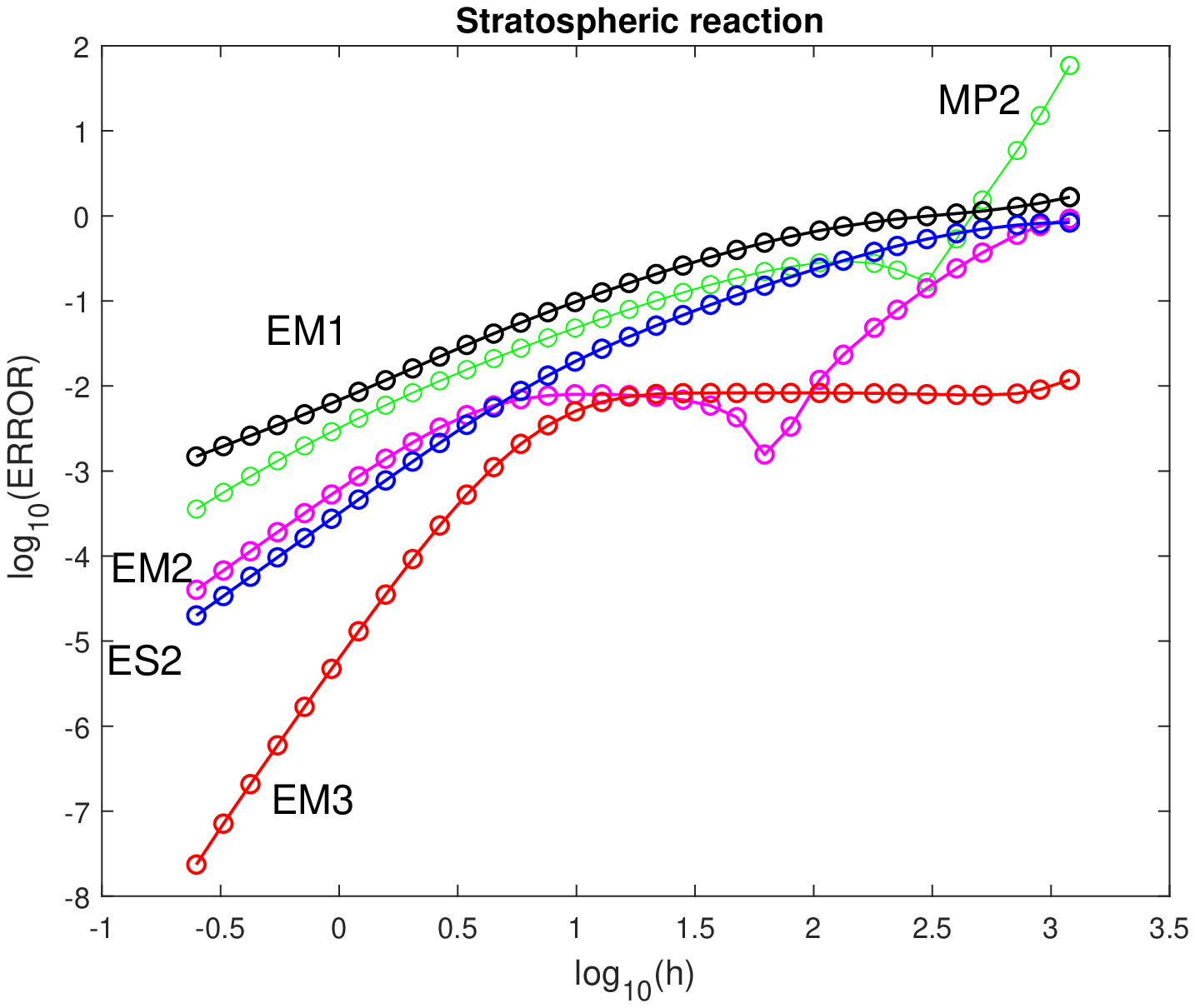} \\ 
    \caption{
Left: Error in the preserved quantities $I_1$ and $I_2$ for the stratospheric reaction model. Right:  The 2-norm error of the numerical solutions for the stratospheric reaction model for the components $(y_3,y_4,y_5,y_6)$ at the final time $t_f=t_0+3600$ (one hour) versus the time step in double logarithmic scale.}
		\label{fig:Stratospheric2}
\end{center}
\end{figure}

We have repeated the numerical experiments, integrating for just one hour (instead of 72 hours) and measured the two-norm relative error for the vector with components $\tilde{\bf y}=[y_3,y_4,y_5,y_6]$ since at the final time $y_1$ and $y_2$ vanish. The reference solution is obtained numerically using the third-order method and a sufficiently small time step. Figure~\ref{fig:Stratospheric2} (right panel) shows the results obtained where we can observe the order of convergence of each method for this non-autonomous problem.
Figure~\ref{fig:Stratospheric2} (left panel) shows 
the error in the preservation of the quantities $I_1={\bf w}_1^\top {\bf y}(t_f)$ (curves with circles) and $I_2={\bf w}_2^\top {\bf y}(t_f)$ (curves with stars). 
{Remarkably, the error committed for $I_2$ is orders of magnitude smaller than the error in the actual solution, as seen the left panel in Fig.~\ref{fig:Stratospheric2}!} 

{{}{
We observe that MP2, as in the previous examples, provide slightly worse results than EM2 when accurate results are desired, but the error considerably grows for large time steps (the approximation to the exponential in this case is not accurate). Surprisingly, it provides more accurate results in the exact preservation of $I_2$ and for all time steps positivity was preserved even if this property was not guaranteed for the method MP2 since $A$ is not graph Laplacian. Were one to prove that the matrix $A$ has no eigenvalues with positive real part then $I-tA$ would be an $M$-matrix and positivity would be guaranteed, and this deserves further investigation.  
 }


\subsection{Example 4: The MAPK cascade}

Finally, we consider the model of \cite{hadavc2017minimal} (Table 3, Fig 3, equations (12)-(17)), which is closely related to the MAPK cascade, given in \eqref{eq:Oscillation:model} with values therein for the parameters and initial conditions. The solution for each component is shown in the left panel of Figure~\ref{fig:MAPK}  for the time interval $t\in[0,200]$ (the initial conditions clearly identify each curve) where we observe that, after a transition period, the solution turns nearly periodic. Next, we have numerically solved the problem  for $\alpha=1$ using the new exponential methods using different values of the time step and measured the two-norm relative error in the vector solution at the final time. The right panel of Figure~\ref{fig:MAPK} shows the results obtained.

\begin{figure}[ht]%
\begin{center}%
    \hspace*{-0.33cm}
    \includegraphics[width=6.4cm]{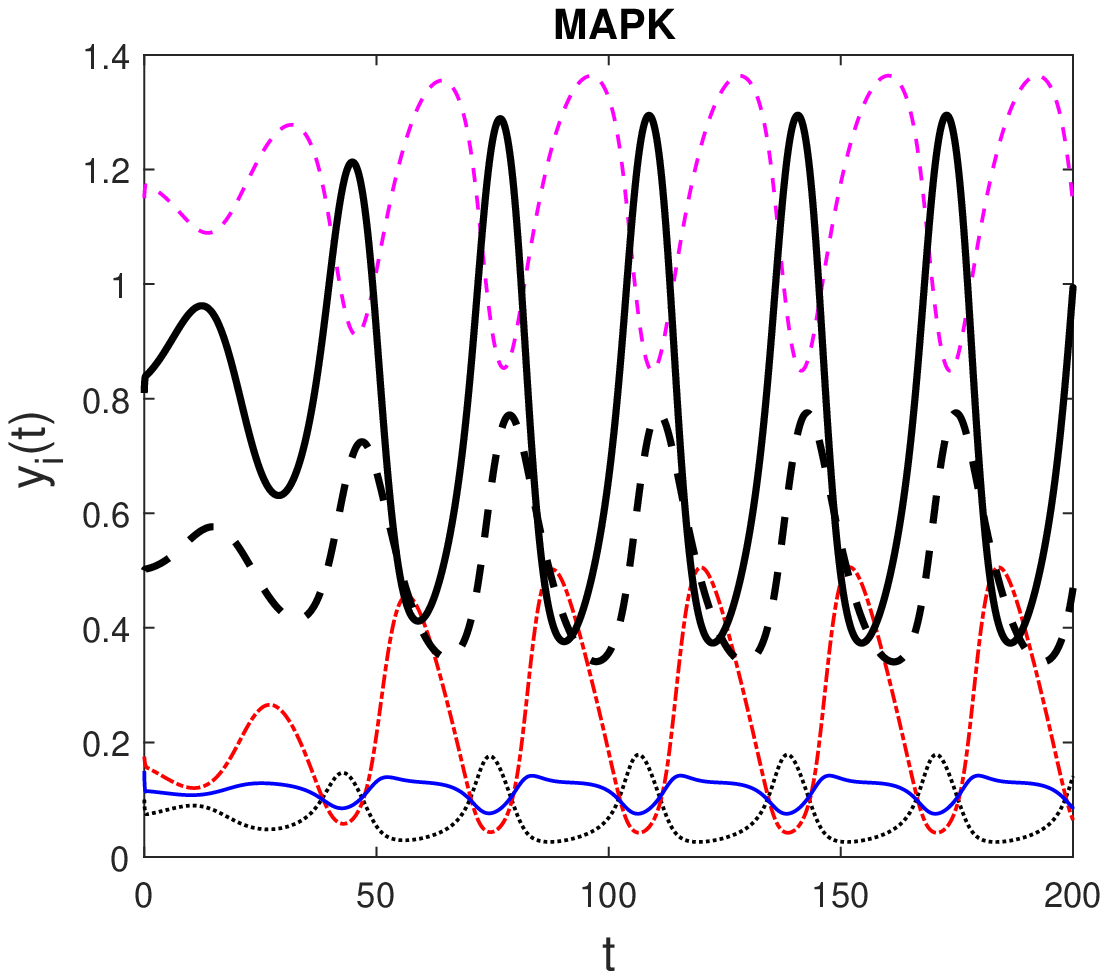} 
    \includegraphics[width=6.4cm]{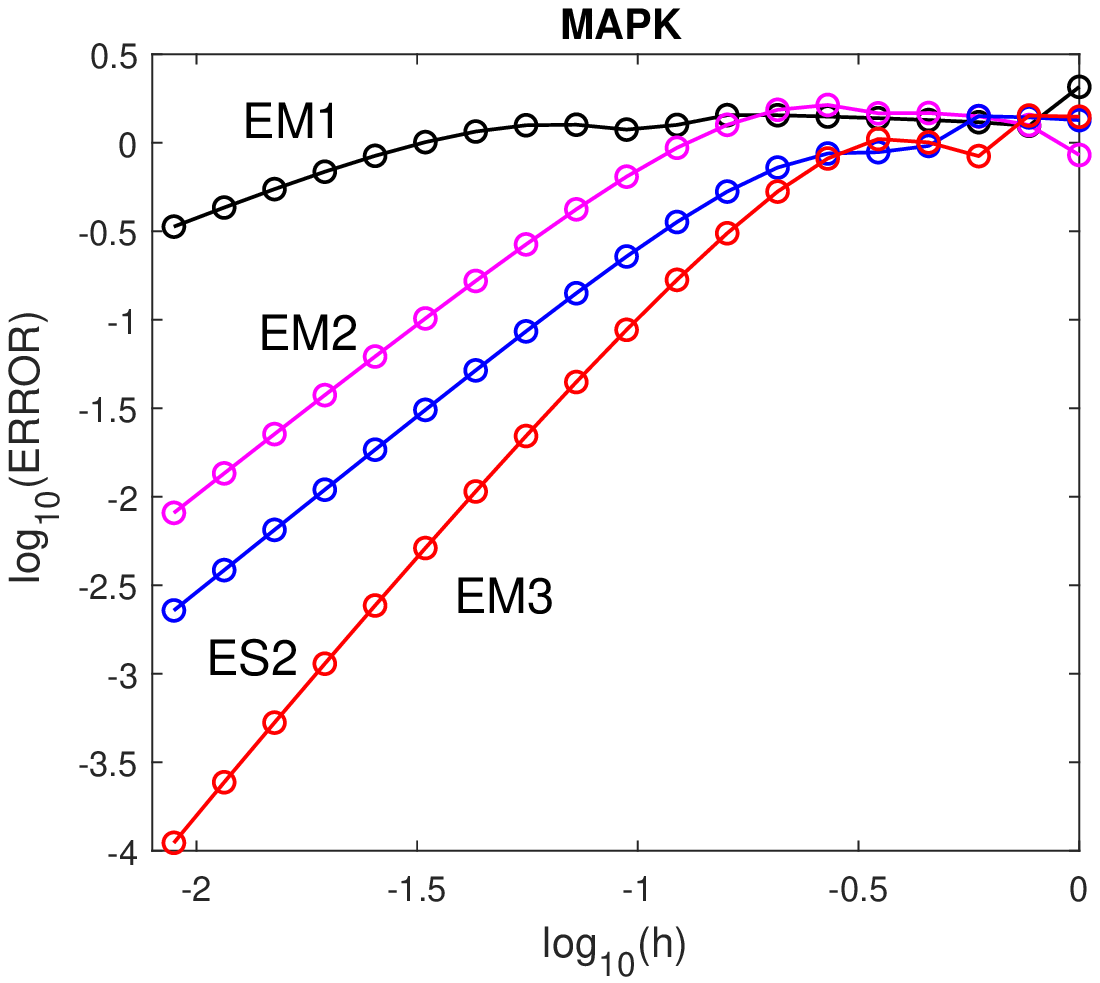} \\ 
    \caption{Left: Solution of the MAPK cascade, given in \eqref{eq:Oscillation:model}, and right: two-norm relative error in the vector solution at the final time versus the time step in double logarithmic scale.}
		\label{fig:MAPK}
\end{center}
\end{figure}

\setcounter{equation}{0}
\setcounter{figure}{0}
\section{Conclusions}

{
Preservation of inequalities is considerably more challenging than the recovery of `equality invariants' under discretisation. Thus, while numerous geometric numerical integration algorithms present us with a wide range of highly effective means to recover conservation laws, often of crucial importance in applications, this is not the case with inequalities and, of particular importance to us, nonnegativity of solutions. The importance of the latter in applications is clear -- the number of chemical species cannot be negative, temperature cannot be less that $0^\circ$K, the 
{{}{number of infected people $I(t)$}}
cannot (sadly) be negative -- yet we cannot be assured that computed ODE solutions remain nonnegative in this setting unless the order is unacceptably low. 
As aforementioned, the subject has already received significant attention and led to the development of Patankar-type methods \cite{burchard03hoc,kopecz18ooc,patankar80nht,bertolazzi1996pac}.
 In this paper we have developed a framework allowing us to use higher-order methods in this setting. While this framework is by no means final and many challenges remain, it represents in our view useful contribution to a different kind of geometric numerical integration, one dealing with preservation of inequalities.

An outstanding challenge is to approximate the exponential of matrices by diagonal Pad\'e approximants or by other means (e.g.\ Krylov-subspace methods) to reduce the cost of the algorithms for large ODE systems while still preserving positivity. Another is to explore the scope of methods, like the commutator-free Magnus integrators \eqref{eq:CF3}, which almost preserve positivity and formulate `almost preservation' in more precise terms. 

Yet, perhaps the most interesting challenge is to explore the surprising success of `almost positivity-preserving' methods, e.g.\ the fourth-order commutator-free Magnus method, in the examples in this paper. Recall that classical ODE solvers that preserve positivity are restricted to order one \cite{bolley78cdl}, while in this paper we have introduced second-order positivity-preserving methods in the non-classical class of Magnus integrators, and other high-order methods have been introduced elsewhere, in particular modified Patankar methods. It is natural to formulate the conjecture that this is as much as can be done within the realm of such methods, but equally fascinating is the remarkable almost-preservation of positivity or mass (at any rate in the examples of this paper) by some higher-order methods. For example, Figure~\ref{fig:Stratospheric2} (left) is concerned with two  conservation laws in a stratospheric reaction: one is preserved correctly, up to roundoff error, while the other is preserved to much higher accuracy than the error committed (cf.\ Fig.~\ref{fig:Stratospheric2} right) in the solution itself. We look forward to an explanation.

\section*{Acknowledgments}%
The authors thank the Isaac Newton Institute for Mathematical Sciences for support and hospitality during the programme ``Geometry, compatibility and structure preservation in computational differential equations" when work on this paper was undertaken. This work was supported by EPSRC grant  EP/R014604/1. S.B. has been supported by project PID2019-104927GB-C21 (AEI/FEDER, UE).

\bibliographystyle{plain}
\bibliography{Draft_GraphLaplacian_M2AM_ACCEPTED}

\end{document}